\newcommand{\redsout}{\bgroup\markoverwith{\textcolor{red}{\rule[0.5ex]{2pt}{.4pt}}}\ULon}
\newcommand{\e}{\varepsilon}
\newcommand{\p}{\partial}
\newcommand{\R}{\mathbb{R}}
\newcommand{\m}{\mathbf{m}}
\newcommand{\M}{\mathbf{M}}
\newcommand{\logep}{\left| \log \e \right|}
\newcommand{\supp}{\text{supp}}
\newcommand{\calS}{\mathcal{S}}
\newcommand{\LN}{\left\|}
\newcommand{\RN}{\right\|}
\newcommand{\LC}{\left(}
\newcommand{\RC}{\right)}
\newcommand{\LB}{\left[}
\newcommand{\RB}{\right]}
\newcommand{\eps}{\varepsilon}
\newcommand{\ov}{\overline}
\newcommand{\eqal}[1]{\begin{equation}\begin{aligned}#1\end{aligned}\end{equation}}
\newcommand{\re}{\mathbb R}
\newcommand{\pd}{\,\partial}
\newcommand{\mc}{\mathcal}
\newcommand{\Om}{\Omega}
\newcommand{\al}{\alpha}
\newcommand{\wh}{\widehat}
\newcommand{\wt}{\widetilde}
\newcommand{\ts}{\wt{\mc S}}
\newcommand{\s}{\mc S}
\newcommand{\sg}{\sigma}
\newcommand{\ms}{\mathscr}
\newcommand{\vp}{\varphi}
 \definecolor{skyblue}{rgb}{0.85,0.85,1}
\newtheorem{theorem}{Theorem}
\newtheorem{lemma}{Lemma}
\newtheorem{definition}{Definition}
\theoremstyle{remark}
\begin{document}

\title[An inverse problem from condensed matter physics]{An inverse problem \\ from condensed matter physics} 

\author[Lai]
{Ru-Yu Lai}
\address{School of Mathematics, University of Minnesota, Minneapolis, Minnesota 55455
}
\email{rylai@umn.edu}

\author[Shankar]
{Ravi Shankar}
\address{Department of Mathematics, University of Washington, Seattle, WA 98195-4350, USA}
\email{shankarr@uw.edu}

\author[Spirn]
{Daniel Spirn}
 \address{School of Mathematics, University of Minnesota, Minneapolis, Minnesota 55455
 }
\email{spirn@math.umn.edu}

\author[Uhlmann]
{Gunther Uhlmann}
\address{Department of Mathematics, University of Washington, Seattle, WA 98195-4350, USA;}\address{  Department of Mathematics, University of Helsinki, Helsinki, Finland;}\address{HKUST Jockey Club Institute for Advanced Study, HKUST, Clear Water Bay, Kowloon, Hong Kong.}
\email{gunther@math.washington.edu}

\thanks{R.-Y. L. was partly supported by the AMS-Simons Travel Grants.
R.S. and D.S. were supported in part by the NSF. G.U. was partly supported by the NSF, a Si-Yuan Professorship at IAS, HKUST, and a FiDiPro at the University of Helsinki.}

\begin{abstract}
We consider the problem of reconstructing the features of a weak anisotropic background potential by the trajectories of vortex dipoles in a nonlinear Gross-Pitaevskii equation.  At leading order, the dynamics of vortex dipoles are given by a Hamiltonian system.  If the background potential is sufficiently smooth and flat, the background can be reconstructed using ideas from the boundary and the lens rigidity problems. We prove that reconstructions are unique, derive an approximate reconstruction formula, and present numerical examples.
\end{abstract}

\maketitle

\section{Introduction}
 
We consider the question of reconstruction of a potential from the dynamical behavior 
of vortex dipoles in an inhomogeneous Gross-Pitaevskii equation, 
\begin{equation} \label{eq:igl}
i \p_t u_\e = \Delta u_\e + {1\over \e^2} \LC \mathcal{K}^2_\e(x)  - |u_\e|^2 \RC u_\e
\end{equation}
on $\mathbb{R}^2$.   
The inhomogenous Gross-Pitaevskii equation \eqref{eq:igl} arises in many places, including Bose-Einstein Condensation (BEC), nonlinear optics \cite{Arecchi,arecchi1991vortices, PR}, and the  behavior of  superfluid ${}^4$He near a boundary  \cite{schwarz1985three, tsubota1993pinning, photograph}.   
We refer to \cite{PR} for discussion of the relevance to physical problems.  Our motivation stems from the nontrivial dynamical behavior of vortices in Bose-Einstein Condensates, as discussed below. 

Vortices are a fundamental feature of  Bose-Einstein condensates.  They are localized regions where the condensed matter loses its superstate and where the modulus of $u_\e$ locally vanishes.  Vortices also carry a quantized degree about each nodal point, and so in two dimensions  solutions of \eqref{eq:igl} with $\e \ll 1$ take the form  
\begin{equation} \label{eq:vortexform}
u_\e(x,t) \approx \prod_{j=1}^d Q_\e(|x-a_j(t)|) \LC {x - a_j(t) \over |x - a_j(t)|} \RC^{d_j},
\end{equation}
where $d_j \in \{-1,1\}$ is the degree of the vortex about $a_j(t)$ and $ Q_\e (s) \to 0$ as $s \to 0$ and roughly $|u_\e(x)|^2 \approx \mathcal{K}_\varepsilon^2(x)$ in $\cap_j\{ |x - a_j| \gg \e\} $. 

Dynamical motion laws for the vortex positions, $a_j(t)$ in \eqref{eq:igl} were first proposed by Fetter \cite{fetter1} for trivial backgrounds $\mathcal{K}_\varepsilon (x) \equiv 1$, and it was shown that the $a_j(t)$ with initial positions $a_j^0$ are governed by an ODE that depends solely on vortex-vortex interaction,
\begin{equation} \label{eq:ODEsf}
\pi d_j \dot{a}_j = \nabla^\perp_{a_j} W(a,d),  
\end{equation}
where $a = (a_1, \ldots, a_n)$, $d = \{ d_1, \ldots, d_n\}$, and $W(a,d)$ is the Coulomb potential, 
\begin{equation} \label{eq:coulomb}
W(a,d) = - \sum_{j \neq k} d_j d_k \log |a_j - a_k| .
\end{equation}
Here and in what follows, we will always take $d_j \in \{ \pm 1\}$.  
For $u=(u_1, u_2)\in \R^2$, we define $u^\perp = (u_2, -u_1)$.
If a function $w : \R^2 \rightarrow \R$, then we denote the gradient of $w$ by $\nabla  w = (\partial_1 w, \partial_2 w)$ and also
$\nabla^\perp w = (\partial_2 w, -\partial_1 w)$.
Similarly, if a function $f: \R^{2n}\rightarrow \R$. We define $\nabla_{a_i} f$, the gradient of $f$ with respect to $a_i = (a_{i,1}, a_{i,2})$, by 
$
   \nabla_{a_i} f = (\partial_{a_{i,1}}f, \partial_{a_{i,2}}f) 
$
and also define the function
$\nabla^\perp_{a_i} f$ by 
$
   \nabla^\perp_{a_i} f = (\partial_{a_{i,2}} f, -\partial_{a_{i,1}} f).
$

The ODE system \eqref{eq:ODEsf} is identical to the Kirchoff-Onsager law for point vortices in two dimensional incompressible Euler equations.  
The connection between \eqref{eq:igl} and \eqref{eq:ODEsf} was rigorously proved by \cite{CJ}, and also \cite{CJ99, LX}. We also note the numerical works \cite{Bao3, Bao1, Bao2} that study the associated vortex dynamics.
When the BEC has a strong trapping potential, $ \mathcal{K} _\e(x) = O(1)$, then vortices tend to travel slowly along the level sets of the Thomas-Fermi profile.    Fetter-Svidzinsky \cite{fetter2} wrote down the corresponding dynamical law,
 \begin{equation} \label{eq:ODEbec}
\pi d_j \dot{a}_j = \nabla^\perp_{a_j} \log \rho_{TF}(a_j),
\end{equation}
where the Thomas-Fermi profile $\rho_{TF}$ arises  from solving the elliptic PDE,
\begin{equation*}
0 = \Delta \rho_{TF} + {1\over \e^2} ( \mathcal{K}^2_\e(x) - \rho_{TF}^2) \rho_{TF}
\end{equation*}
on $\R^2$. Jerrard-Smets provided a rigorous proof of \eqref{eq:ODEbec} in \cite{JSm}.

\subsection{Critical scaling dynamics}
Dipoles in BECs have been created in physical experiments, by using the Kibble-Zurek mechanism  \cite{freilich2010real} and by dragging non radially symmetric BEC's through laser obstacles in \cite{neely2010observation}, and in numerical experiments \cite{MKFCS}.  
  These studies show that dipoles interact in nontrivial ways with each other and the background potentials.  Reduced ODE models for the dynamics of dipole configurations in BECs were proposed in \cite{MTKFCSFH, SKS, SMKS, torres2011dynamics,Torres} and  agree with numerical simulations of the full equation \eqref{eq:igl} and with physical experiments, \cite{MTKFCSFH}.  These ODE models, described below, were rigorously proven in \cite{KMS}.

There is a critical asymptotic regime where vortices interact with both the background potential $ \mathcal{K} _\varepsilon(x)$ and each other.   
This corresponds to studying how either vortices transition over small material defects, whose size is related to the length scale associated to the vortex cores, or how BEC vortices behave in anisotropic potentials, which have been physically realized, see \cite{freilich2010real, neely2010observation}.

Along the lines of the reduced ODE's \eqref{eq:ODEsf} and \eqref{eq:ODEbec},  a set of simple ODE's with anisotropic traps were proposed and  studied in which vortices interact with the background potential and with each other, see \cite{SMKS}.  Note also the earlier work on vortices in BEC in isotropic potentials, \cite{SKS, torres2011dynamics, Torres}.  A rigorous proof was proved by \cite{KMS} in a specific critical regime of inhomogeneities. When $\mathcal{K}_\e(x)$ is  asymptotically close to 1, described below, then vortices interact with both the background and each other, and outside of the range one finds induce dynamics that are dominated by only the background potential or by only vortex-vortex interactions. In particular, if one sets $\mathcal{K}^2_\e(x) = 1 + {Q_\e \over \logep}$ and if $Q_\e(x) \to Q_0(x)$ in a sufficiently smooth topology, then one finds that vortices $a_j$ move according to the Hamiltonian system,  
\begin{equation} \label{eq:ODE}
\pi d_j \dot{a}_j = \nabla^\perp_{a_j} H(a, d),
\end{equation}
where 
\begin{equation}\label{H}
H(a,d) = W(a,d) + \pi  \sum^{n}_{j=1} Q_0(a_j) 
\end{equation}
and $W(a,d)$ is the Coulomb potential defined in \eqref{eq:coulomb}. Here $Q_0(x)$ is the limiting rescaled background potential.   
The following result of \cite{KMS} establishes links between \eqref{eq:igl} and \eqref{eq:ODE}-\eqref{H}:

\begin{theorem}[\cite{KMS}]
Let $ \mathcal{K}_\e^2 = 1 +{Q_\e(x) \over \logep}$ where the $Q_\e \to Q_0$ in $H^4$.
Let $\{a_{j}^0, d_j\}$ be a configuration of vortices such that $d_j \in \{-1, 1\}$ and suppose $u_\e^0, 0 < \e <1$ is 
well-prepared initial data.
If $u_\e(t)$ is a solution to \eqref{eq:igl} with initial data $u_\e^0$,
then there exists a time $T > 0$    
such that for all $t \in [0,T]$, $u_\e(x,t)$ is asymptotically close to \eqref{eq:vortexform} with vortex locations given 
by a solution to the ODE
\eqref{eq:ODE}-\eqref{H}\footnote{Although the proof in \cite{KMS} is for bounded domains, the argument can be adapted to $\Omega \equiv \R^2$  by integrating methods for Gross-Pitaevsky vortex dynamics on $\R^2$.  See Remark~1.1 of \cite{KMS}.}.
\end{theorem}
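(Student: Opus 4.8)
Since this theorem is quoted from \cite{KMS}, I sketch the structure such an argument would take. The plan is to use the modulated (renormalized) Gross--Pitaevskii energy method, in the spirit of Colliander--Jerrard, Lin--Xin, Jerrard--Spirn, Jerrard--Smets and Serfaty, adapted to the critical inhomogeneity $\mathcal{K}_\e^2 = 1 + Q_\e/\logep$. One works with the energy
\[
\calE_\e(u) = \int_{\R^2} \tfrac12 |\nabla u|^2 + \tfrac{1}{4\e^2}\big(\mathcal{K}_\e^2(x) - |u|^2\big)^2 \,dx ,
\]
which a direct computation shows is conserved along \eqref{eq:igl}, together with a sharp energy expansion: for $u_\e$ of the vortex form \eqref{eq:vortexform} with well-separated cores at $a = (a_1,\dots,a_n)$ and degrees $d_j = \pm 1$,
\[
\calE_\e(u_\e) = n\pi\logep + H(a,d) + o(1) \qquad \text{as } \e \to 0 ,
\]
up to a fixed normalization, $H$ being the Hamiltonian \eqref{H}. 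The essence of the \emph{critical} scaling lives here: the defect term $\tfrac{1}{4\e^2}\int(\mathcal{K}_\e^2 - |u_\e|^2)^2$ contributes, beyond the leading $n\pi\logep$, a term of size $\pi\sum_j (Q_\e(a_j)/\logep)\cdot\logep \to \pi\sum_j Q_0(a_j)$, which sits at exactly the $O(1)$ order of the Coulomb interaction $W(a,d)$; this is precisely what balances the two forces in \eqref{eq:ODE}--\eqref{H}.

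The dynamical step monitors the modulated energy $D_\e(t) := \calE_\e(u_\e(t)) - n\pi\logep - H(a(t),d)$, where $a(t)$ solves \eqref{eq:ODE}: well-preparedness gives $D_\e(0) = o(1)$, and one shows $\frac{d}{dt}D_\e$ is bounded by $C D_\e + o(1)$, the cancellation of the leading-order term being exactly what forces the form \eqref{H} of $H$ and hence the ODE. To differentiate $\calE_\e$ along the flow one tracks the supercurrent $j_\e = (iu_\e,\nabla u_\e)$ and the vorticity $Ju_\e = \tfrac12\curl j_\e$: differentiating \eqref{eq:igl} gives a weak transport identity for $Ju_\e$, and Jacobian concentration estimates (Jerrard--Soner, Jerrard--Spirn) yield $Ju_\e(\cdot,t) \rightharpoonup \pi\sum_j d_j\,\delta_{a_j(t)}$ for each $t$ so long as $D_\e(t)$ stays small. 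A Gr\"onwall argument then closes the loop on any interval $[0,T]$ with $T$ strictly below the first collision time of \eqref{eq:ODE}, on which the $a_j(t)$ remain uniformly separated and bounded and all constants are uniform in $\e$; that the effective velocity field is the Hamiltonian one --- $\nabla^\perp W$ from vortex--vortex interaction, $\pi\nabla^\perp Q_0(a_j)$ from the defect --- follows by matching the $O(1)$ terms against the first variation of $H$.

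I expect the main obstacle to be exactly the $o(1)$ precision of the energy expansion, uniformly in time. For strong traps \eqref{eq:ODEbec} the potential is $O(1)$ and dominates, and for the trivial background \eqref{eq:ODEsf} there is no potential; here $\mathcal{K}_\e^2 - 1 = Q_\e/\logep$ is vanishingly small, so its force is a lower-order effect \emph{inside} the energy that must nonetheless be captured with the correct constant. This requires controlling the interaction of the slowly varying $Q_\e$ with the vortex cores at scale $\e$, sharpening the lower-bound (coercivity) half of the $\Gamma$-type expansion from $o(\logep)$ to $o(1)$, and ruling out spurious concentration, such as vortex--antivortex pair creation or energy escaping to spatial infinity. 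Finally, as the footnote indicates, upgrading from a bounded domain to $\R^2$ needs the far field handled --- decay of $\mathcal{K}_\e^2 - 1$ and finite-energy considerations for Gross--Pitaevskii on the plane --- although the mechanism itself is unchanged.
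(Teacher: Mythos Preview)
The paper does not prove this theorem at all: it is stated purely as a citation of the result from \cite{KMS}, with no accompanying argument beyond the footnote remarking that the bounded-domain proof can be adapted to $\R^2$. So there is nothing in the paper to compare your proposal against.

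That said, your sketch is a faithful high-level outline of the modulated-energy strategy that \cite{KMS} (and the surrounding literature you cite: Colliander--Jerrard, Lin--Xin, Jerrard--Spirn, Jerrard--Smets, Serfaty) actually uses. You correctly identify the key mechanism of the critical scaling, namely that $\mathcal{K}_\e^2 - 1 = Q_\e/\logep$ contributes $\pi\sum_j Q_0(a_j)$ at exactly the $O(1)$ level of the Coulomb interaction $W(a,d)$, and that this is what produces the combined Hamiltonian \eqref{H}. Your identification of the main technical obstacle --- sharpening the energy expansion to $o(1)$ rather than $o(\logep)$ precision so that the vanishingly small defect is captured with the correct constant --- is also on point. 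For the purposes of this paper, which only needs the statement of the theorem as input for the inverse problem, your sketch is more than adequate as a pointer to \cite{KMS}.
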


The notion of well-preparedness requires that the initial data have the asymptotically-correct amount of energy to generate the $n$ vortices.

When there are only two vortices present on $\R^2$ then it is straightforward to see that  $T = \infty$; hence, dipoles will never collide in finite time, so long as $Q_0$ is sufficiently smooth.   
We note that vortex dipoles that interact with each other and the background potential have been created experimentally in \cite{neely2010observation}.

\subsection{Setup of the dipole problem}
In this paper, we consider dipoles in a critical regime in which vortices not only interact with each other but also with the background potential. The inverse problem we are interested in is to reconstruct inhomogeneous background potential $Q_0(x)$ from the trajectories of the dipole. 
We note that the detailed setting of this inverse problem is in section \ref{IP}. The setup of the dipole problem and its associated ODE system are discussed as follows.  

Consider a pair of vortices with centers $a=\{a_+, a_-\}$ of opposite charge (vortex dipole) $d=\{d_+, d_-\}=\{1, -1\}$. We replace $a_1$ and $a_2$ in the limiting ODE (\ref{eq:ODE}) by $a_+$ and $a_-$, respectively, and obtain the following equations:
\begin{align}\label{di2}
\pi d_\pm \dot{a}_{\pm} = \nabla^\perp_{a_\pm} H(a,d), 
\end{align}
where the function $H$ defined in (\ref{H}) is
\begin{align*}
H(a,d) = W(a,d) + \pi ( Q_0(a_+)+Q_0(a_-))  
\end{align*}
with the Coulomb potential  
$$
W(a, d) = - d_+ d_- \log |a_+ - a_-| = \log |a_+ - a_-|
$$ 
on $\R^2$.

A direct computation of (\ref{di2}) gives the following evolution:
\begin{align}\label{dipolepair}
 \dot{a}_+ & = {1\over \pi} { (a_+ - a_-)^\perp \over |a_+ - a_-|^2 } + \nabla^\perp Q_0 (a_+),  \\\label{dipolepair1}
-  \dot{a}_- & = -{1\over \pi}  { (a_+ - a_-)^\perp \over |a_+ - a_-|^2 } + \nabla^\perp Q_0(a_-) .  
\end{align}

To simplify the problem, we will track the centers of mass $q$ and the travel direction $p^\perp$ which are evaluated as follows:
\begin{align*}
p(t) & = {1\over 2} \LC a_+(t) - a_-(t)\RC \\
q(t) & = {1\over 2} \LC a_+(t) + a_-(t) \RC.
\end{align*}
Here, $2p$ is the dipole displacement.  From (\ref{dipolepair}) and (\ref{dipolepair1}), we obtain a new ODE system for $p$ and $q$:  
\begin{align}
\label{e:comeqn1}
2 \dot{p} & = \nabla^\perp Q_0 (q+p)  + \nabla^\perp Q_0 (q-p)\\
\label{e:comeqn2}
2 \dot{q} & = {1\over \pi} { p^\perp \over |p|^2} + ( \nabla^\perp Q_0 (q+p)  - \nabla^\perp Q_0 (q-p)).  
\end{align}
The corresponding Hamiltonian is 
\begin{equation}\label{H1}
H(q, p^\perp) = H(q_1,q_2; p_2,-p_1) \equiv {1\over 2 \pi} \log |p| +{1\over 2} \LC Q_0(q+p) + Q_0 (q - p) \RC.
\end{equation}

Note that when the background potential $Q_0$ is constant, the dipole displacement $2p$ is a constant vector, which from \eqref{e:comeqn2} implies that a dipole will travel with constant velocity and fixed direction $p^\perp$. On the other hand, if the background potential is variable, the inhomogeneities will refract the dipole paths. The behavior of the ODE system generated by dipoles in an anisotropic potential is nontrivial, see figure~\ref{fig1}.
\begin{figure}[htbp] 
   \centering
   \includegraphics[width=4in]{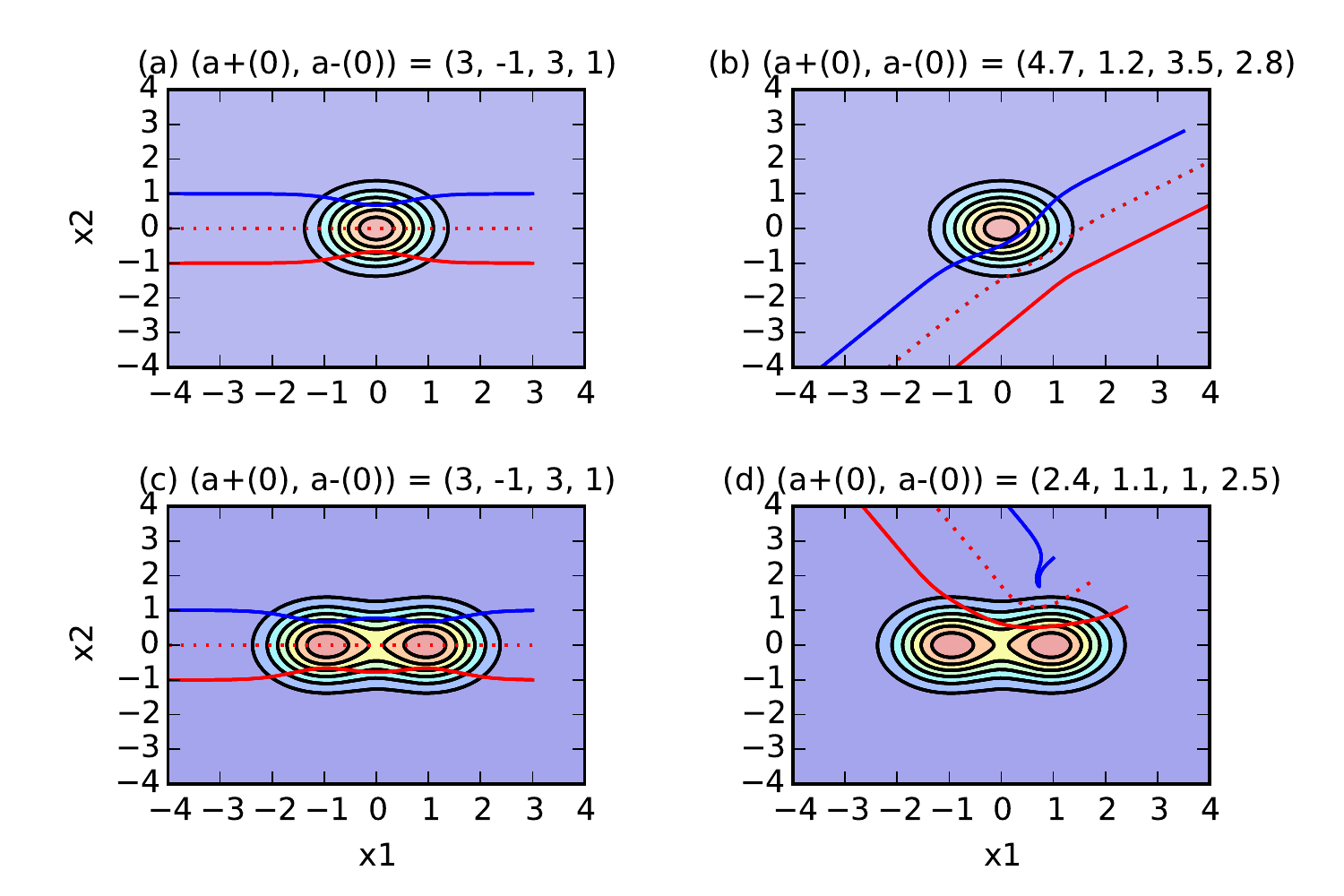} 
   \caption{Trajectories of the dipoles (solid lines) and of their centers of mass (dotted line) off anisotropic background potentials $0.1e^{-|(x_1, x_2)|^2}$ and $0.1e^{-|(x_1, x_2)-(1,0) |^2}+0.1e^{-|(x_1, x_2) +(1,0) |^2}$.  }
   \label{fig1}
\end{figure}
The  dynamical behavior of dipoles in the presence of anisotropic backgrounds is extremely rich and has been studied, see for example \cite{goodman2015dynamics}.

\section{An Inverse Problem: The Main Result}\label{IP}	
 
Let $\Omega$ be an open and bounded domain in $\R^2$. Let $B_{\rho/2\pi}\supset\ov\Omega$ be an open ball with center at the origin and the radius $\rho/2\pi>0$. We denote the  boundary of $B_{\rho/2\pi}$ by $\pd B_{\rho/2\pi}$.  To recover a background potential $Q_0$ which is constant outside $\Omega$, we will first send vortices with initial positions outside $B_{\rho/2\pi}$ into $\Omega$.  Then, after the vortices exit $\Omega$ and $B_{\rho/2\pi}$, we will use their positions and travel directions on $B_{\rho/2\pi}$ to deduce $Q_0$. 

We denote the center of mass by $x(t)$ and the travel direction by $\xi(t)$; that is, $x(t) = q(t)$ and $\xi(t) = p(t)^\perp$.  From (\ref{e:comeqn1})-(\ref{H1}), the trajectories of the center of mass and the travel direction are modeled by the following Hamiltonian system  
\begin{align}\label{odesys}
        & \dot{x} (s) = {\partial H \over \partial \xi},  \ \ \
         \dot{\xi} (s) = -{\partial H \over \partial x},
\end{align}
where the Hamiltonian is 
\begin{align}\label{Hamil}
   H(q,p^\perp) \equiv H(x,\xi) = {1 \over 2 \pi} \log |\xi| + {1 \over 2} ( Q_0(x + \xi^\perp) + Q_0 (x - \xi^\perp) ).
\end{align}

For initial conditions $(x(0),\xi(0))=(x_0,\xi_0)=:X^{(0)}$, we let direction $\xi_0\in \mathbb{S}^1=\{\theta\in\mathbb{R}^2: |\theta|=1\}$ and position $x_0$ satisfy $x_0\cdot\xi_0=-\rho/2\pi$.  Equivalently, for each $\xi_0\in\mathbb S^1$, we let $x_0=z-\frac{\rho}{2\pi}\frac{\xi_0}{|\xi_0|}$, where $z\in\re^2$ satisfies $z\cdot\xi_0=0$.  When $z=0$, $x_0$ is on $\pd B_{\rho/2\pi}$, opposite to direction $\xi_0$.  For large $z$, the vortex trajectories will not intersect $B_{\rho/2\pi}$.  See Figure \ref{f:dipole}.

Suppose that $H_j$ are the Hamiltonians corresponding to two different limiting background potentials $Q_0^j$ for $j=1,2$, respectively.  We denote $(x_j, \xi_j)$ to be the solutions of the following Hamiltonian system with the same initial conditions $X^{(0)}$:
\begin{align*} 
    \left\{
         \begin{array}{ll}
         \dot{x}_j (s) = {\partial H_j \over \partial \xi} & \\
         x_j (0) = x_0  & \\
         \end{array}
    \right.
\ \ \ \hbox{and}\ \ \ \ \
    \left\{
     \begin{array}{ll}
     \dot{\xi}_j (s)=-{\partial H_j \over \partial x} & \\
     \xi_j (0)=\xi_0. & \\
     \end{array}\right.
\end{align*}
More precisely, $(x_1, \xi_1) $ and $(x_2, \xi_2)$ are the solutions to the systems :
\begin{align} \label{e:xeqn}
    \left\{
     \begin{array}{ll}
     \dot{x}  (s)= {1\over 2\pi} { \xi  \over |\xi|^2}  -{1\over 2} ( \nabla^\perp Q _0 (x+\xi^\perp)  - \nabla^\perp Q _0 (x-\xi^\perp)  )   \\
     x  (0)=z -\frac{\rho}{2\pi}\frac{\xi_0}{|\xi_0|}   \\
     \end{array}\right.
\end{align}
and
\begin{align} \label{e:xieqn}
    \left\{
         \begin{array}{ll}
         \dot{\xi} (s) = -{1\over 2}( \nabla Q_0 (x + \xi^\perp)  +  \nabla Q _0 (x - \xi^\perp) ) \\
         \xi(0) = \xi_0  \\
         \end{array}
     \right.
\end{align}
with  $Q_0$ replaced by $Q^1_0$ and $Q^2_0$, respectively. The solutions of \eqref{e:xeqn}-\eqref{e:xieqn} are denoted by 
$$
    X_j(s, X^{(0)}) = X_j(s, x_0, \xi_0) = (x_j(s) , \xi_j(s)),\ \ j=1,2
$$
with initial condition $X^{(0)}$ and limiting background potentials $Q_0^j$.

For each $X^{(0)}$ described above and each $j=1,2$, let
\begin{align}\label{time}
    t^j \equiv t^j(X^{(0)}) 
    &= \sup\Big\{s> 0:\{x_j(s)\pm \xi_j^\bot(s)\}_{\pm=+,-}\cap \ov B_{\rho/2\pi}\neq\varnothing\Big\}
\end{align}
be the time when both vortices $x_j(s)\pm\xi_j^\bot(s)$ have exited $B_{\rho/2\pi}$.  If no such $s$ is in the above set (i.e., large $z$), then $t^j=0$.  If $Q_0$ is constant outside $\Om$ and sufficiently flat, then $t^j$ is finite for each such $X^{(0)}$, and by compactness, the largest such time
\eqal{\label{tau}
\tau^j=\sup_{\substack{|\xi_0|=1,\\z\cdot\xi_0=0}}t^j(X^{(0)})
}
is finite.

Our main result, which is stated in the following theorem, is the uniqueness of the potential $Q_0$ from the dipole's trajectory.  The proof is given in Section \ref{nonlinear}. 
 
\begin{theorem} \label{q1}
Suppose that $Q_0^j\in C^8(\Omega)$ for $j = 1,2$ are background potentials that satisfy $Q_0^1 = Q_0^2 = \hbox{constant}$ in $\R^2 \backslash \Omega$.
Suppose that $t^1 = t^2$ and $X_1(t^1, X^{(0)}) = X_2(t^2, X^{(0)})$ for all $X^{(0)}$ given above.
Then there is a sufficiently small $\varepsilon > 0$ such that if
\begin{align}\label{appro3}
      \nabla  Q_0^1-\nabla  Q_0^2 \in{C^{7}_0( \Omega )},\ \ \ \| \nabla  Q_0^j \|_{C^{7}( \Omega)}\leq \varepsilon,
\end{align}
then $Q^1_0 \equiv Q^2_0$. (see Figure~\ref{f:dipole})
\end{theorem}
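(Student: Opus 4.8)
\smallskip
\noindent\emph{Proof strategy.}
Write $f:=Q_0^1-Q_0^2$. By \eqref{appro3} we have $\nabla f\in C^7_0(\Omega)$, so $f$ is constant wherever $\nabla f=0$; since $f\equiv 0$ outside $\Omega$ it follows that $f$ is compactly supported in $\ov\Omega$ and $\|f\|_{C^8(\R^2)}\le C(\Omega)\,\varepsilon$. The goal is $f\equiv0$. The idea is that ``equal scattering data'' linearizes, modulo lower order, to ``the Radon transform of $f$ vanishes'', and that the smallness in \eqref{appro3} lets one absorb the nonlinear remainder. Two reductions first. Since $\nabla Q_0^j\equiv0$ off $\Omega\subset B_{\rho/2\pi}$, the flow \eqref{e:xeqn}--\eqref{e:xieqn} is the rigid translation $\dot\xi_j=0$, $\dot x_j=\tfrac1{2\pi}\xi_j/|\xi_j|^2$ once both vortices $x_j\pm\xi_j^\perp$ have left $B_{\rho/2\pi}$; hence the hypothesis $t^1=t^2$, $X_1(t^1,X^{(0)})=X_2(t^2,X^{(0)})$ for all admissible $X^{(0)}$ is exactly equality of the full incoming-to-outgoing scattering relations of $H_1$ and $H_2$. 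And since $\|\nabla Q_0^j\|_{C^7}\le\varepsilon$, a Gronwall estimate on \eqref{e:xeqn}--\eqref{e:xieqn} gives $|X_1(s)-X_2(s)|\le C\|\nabla f\|_{C^0}$, shows every trajectory lies within $C\varepsilon$, in $C^1$, of the straight line $s\mapsto(x_0+\tfrac1{2\pi}\xi_0 s,\xi_0)$ it would follow for constant potential, and shows the flow is a diffeomorphism from incoming to outgoing data with no trapped or conjugate trajectories; in particular $\tau^j<\infty$ in \eqref{tau} uniformly in $\varepsilon$.

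\smallskip
\noindent\emph{The integral identity.}
Integrating \eqref{e:xieqn} from $0$ to $t^j$ and using that its right-hand side vanishes off $\Omega$,
\[
\xi_j(t^j)-\xi_0=-\tfrac12\int_0^{t^j}\Big(\nabla Q_0^j\big(x_j(s)+\xi_j^\perp(s)\big)+\nabla Q_0^j\big(x_j(s)-\xi_j^\perp(s)\big)\Big)\,ds.
\]
Subtract the $j=1,2$ identities (equal left-hand sides by the data, and $t^1=t^2$), add and subtract $\nabla Q_0^1$ evaluated along the trajectory of $H_2$, and use $|X_1-X_2|\le C\|\nabla f\|_{C^0}$ together with $\|\nabla^2 Q_0^1\|_{C^0}\le\varepsilon$ to get
\[
\int_0^{t^2}\Big(\nabla f\big(x_2(s)+\xi_2^\perp(s)\big)+\nabla f\big(x_2(s)-\xi_2^\perp(s)\big)\Big)\,ds=E_1,\qquad\|E_1\|\le C\varepsilon\|f\|_{C^1}.
\]
The component of the left side along $\xi_0$ vanishes identically (along each vortex path it is a constant times $f(\text{exit})-f(\text{entry})=0$), so only the $\xi_0^\perp$-component carries information. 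Replacing $(x_2,\xi_2)$ by its straight-line approximant, at the further cost $C\varepsilon\|f\|_{C^2}$, and letting $\xi_0\in\mathbb{S}^1$ and the impact parameter $b=z\cdot\xi_0^\perp$ range over all lines, the identity becomes $(\mathcal I f)(\xi_0,b)=E$ with $\|E\|\le C\varepsilon\|f\|_{C^2}$, where $\mathcal I f$ is a sum of two unit-translated copies of $\partial_b$ applied to the $2$-D Radon transform $Rf$. Since $Rf$ is compactly supported in $b$, the elementary relation $g(b+1)+g(b-1)=h$ for compactly supported $g$ is solved step by step from $b=-\infty$, so this is equivalent to $\|Rf\|_{L^\infty(\text{lines})}\le C\varepsilon\|f\|_{C^2}$: the Radon transform of $f$ is small. (Equivalently, one may keep the true trajectories, in which case $\mathcal I$ becomes a generalized Radon transform attached to the flow of $H_2$, inverting with the same order-$(-\tfrac12)$ mapping properties as $R$ for $\varepsilon$ small.)

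\smallskip
\noindent\emph{Inversion and conclusion.}
View the scattering map $Q_0\mapsto(\text{outgoing data})$ as a nonlinear map $\mathcal F$; the computation above identifies its linearization at a constant, $D\mathcal F(0)$, with (a translate of $\partial_b$ of) the Radon transform together with the further, merely overdetermining, components coming from the $x$-equation. Writing $0=\mathcal F(Q_0^1)-\mathcal F(Q_0^2)=\big(\int_0^1 D\mathcal F(Q_0^2+tf)\,dt\big)f=:Lf$, and differentiating the identity of the previous paragraph in $(\xi_0,b)$ — the step that consumes the $C^8$ regularity, guaranteeing that the trajectories depend $C^k$-smoothly on their initial data — one shows $\|L-D\mathcal F(0)\|\le C\varepsilon$ in the operator norm from $H^s$ to $H^{s+1/2}$ for the relevant range of $s$. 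Since $D\mathcal F(0)$, the Radon transform, is invertible in this scale ($\|f\|_{H^s}\le C\|Rf\|_{H^{s+1/2}}$ on compactly supported $f$), so is $L$ once $\varepsilon$ is small; hence $Lf=0$ forces $\|f\|_{H^s}\le C\varepsilon\|f\|_{H^s}$, so $f\equiv0$, i.e.\ $Q_0^1\equiv Q_0^2$.

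\smallskip
\noindent\emph{Main obstacle.}
The delicate point is the last step: one must show that $E$, and every remainder produced by differentiating the identity in $(\xi_0,b)$, is bounded by $\varepsilon$ times the \emph{same} Sobolev norm of $f$ (with only the $\tfrac12$ smoothing already built into $R$), not by $\varepsilon$ times a strictly higher norm — equivalently, that $L-D\mathcal F(0)$ is an operator of order $-\tfrac12$ with $O(\varepsilon)$ coefficient, so that the extra derivatives landing on $f$ when the trajectory is linearized are exactly compensated by the integrations in the composed X-ray transforms. Making these integral-geometric/FIO estimates on the $\varepsilon$-perturbed flow rigorous is where the $C^7$/$C^8$ flatness hypotheses are used in full, and is the technical heart of the theorem.
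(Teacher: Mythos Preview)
Your overall strategy is the paper's: equality of scattering data gives an integral identity whose linearization at the constant potential is an X-ray--type transform, and smallness of $\|\nabla Q_0^j\|$ should let one invert by perturbation. You have also correctly isolated the genuine difficulty in your final paragraph. The gap is that you do not resolve it, and the specific route you take beforehand makes it harder rather than easier.

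\textbf{Linearizing about straight lines loses derivatives.} When you replace $(x_2,\xi_2)$ by the free trajectory $(x_e,\xi_0)$, the remainder is a Taylor error of the form $\nabla^2 f\cdot(X_2-X_e)$, controlled only by $\varepsilon\|f\|_{C^2}$; after differentiating in $(\xi_0,b)$ it involves still higher norms. However you invert the Radon transform, you end up with $\|f\|_{H^s}\le C\varepsilon\|f\|_{H^{s'}}$ for some $s'>s$, which does not force $f=0$. The paper avoids this by \emph{not} linearizing the trajectory. It keeps the Stefanov--Uhlmann identity \eqref{id:tau} with $\mathbf m=\tfrac12\nabla f$ evaluated along the actual curves $x_1\pm\xi_1^\perp$, Fourier-transforms in the impact parameter, and then changes variables $x_e\to y=x_1(x_e,\xi_0)\pm\xi_1^\perp(x_e,\xi_0)$ (see \eqref{cov}). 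After this substitution the unknown appears simply as $\mathbf m(y)$, and all $\varepsilon$-perturbations sit in the phase $\eta\cdot\phi_\pm(y,\xi_\eta)$ and the Jacobian amplitude. The resulting $\mathcal F_\pm$ is an FIO with amplitude $1+O(\varepsilon)$ in $S^0_{k-2}$; the $L^2$-boundedness criterion of Lemma~\ref{opa} (applied to $\mathcal F_\pm^*\mathcal F_\pm-\mathcal P^*\mathcal P$) then gives $\|\chi_a\widehat{\mathbf m}\|_{L^2}^2\le C\varepsilon\|\mathbf m\|_{L^2}^2+\|\mathcal F_\pm\mathbf m\|_{L^2}^2$ directly in $L^2$, with no derivative landing on $\mathbf m$. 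This is exactly the estimate your ``Main obstacle'' paragraph asks for but does not supply, and it is the reason the $C^8$ hypothesis enters as $k-3=2\cdot 2+1$.

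\textbf{Using only the $\xi$-equation and inverting $g(b+1)+g(b-1)=h$ is the wrong reduction.} In Fourier this is division by $\cos|\eta|$, whose zeros obstruct any clean norm estimate; your ``solve from $b=-\infty$'' works pointwise for compactly supported $g$ but does not mesh with the $H^s\to H^{s+1/2}$ mapping you want to exploit. The paper instead uses \emph{both} rows of the identity. The $\xi$-row gives $\mathcal F_+\mathbf m+\mathcal F_-\mathbf m=\mathcal W^2\mathbf m=O(\varepsilon)$; the $x$-row gives $\mathcal F_+\mathbf m-\mathcal F_-\mathbf m$ equal to small terms plus one a~priori large piece $\mathcal W^3$ coming from the off-diagonal $(\tau-s)E(\xi_0)$ block. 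That piece is shown to be small (the lemma on $\mathcal W^4$) by applying $\xi_\eta\cdot\nabla_\eta$ to the $\xi$-row identity, which converts the $(\tau-s)$ factor into $O(\varepsilon/a)$ amplitudes after a low-frequency cutoff $\chi_a$. One then has $\|\mathcal F_\pm\mathbf m\|_{L^2}\le C(\varepsilon/a)\|\mathbf m\|_{L^2}$ for each sign separately, with no $\cos|\eta|$ to invert, and closes by choosing $a=\varepsilon^{1/2}$. Your argument, based only on integrating $\dot\xi_j$, never sees this cancellation.

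In short: the framework is right, but the proof is incomplete precisely where you say it is, and the missing ingredient is the change-of-variables/FIO machinery of Sections~\ref{ana_F}--\ref{unique}, not a direct linearization about straight lines followed by Radon inversion.
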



\begin{figure}[htbp] 
   \centering
   \includegraphics[width=2.5in]{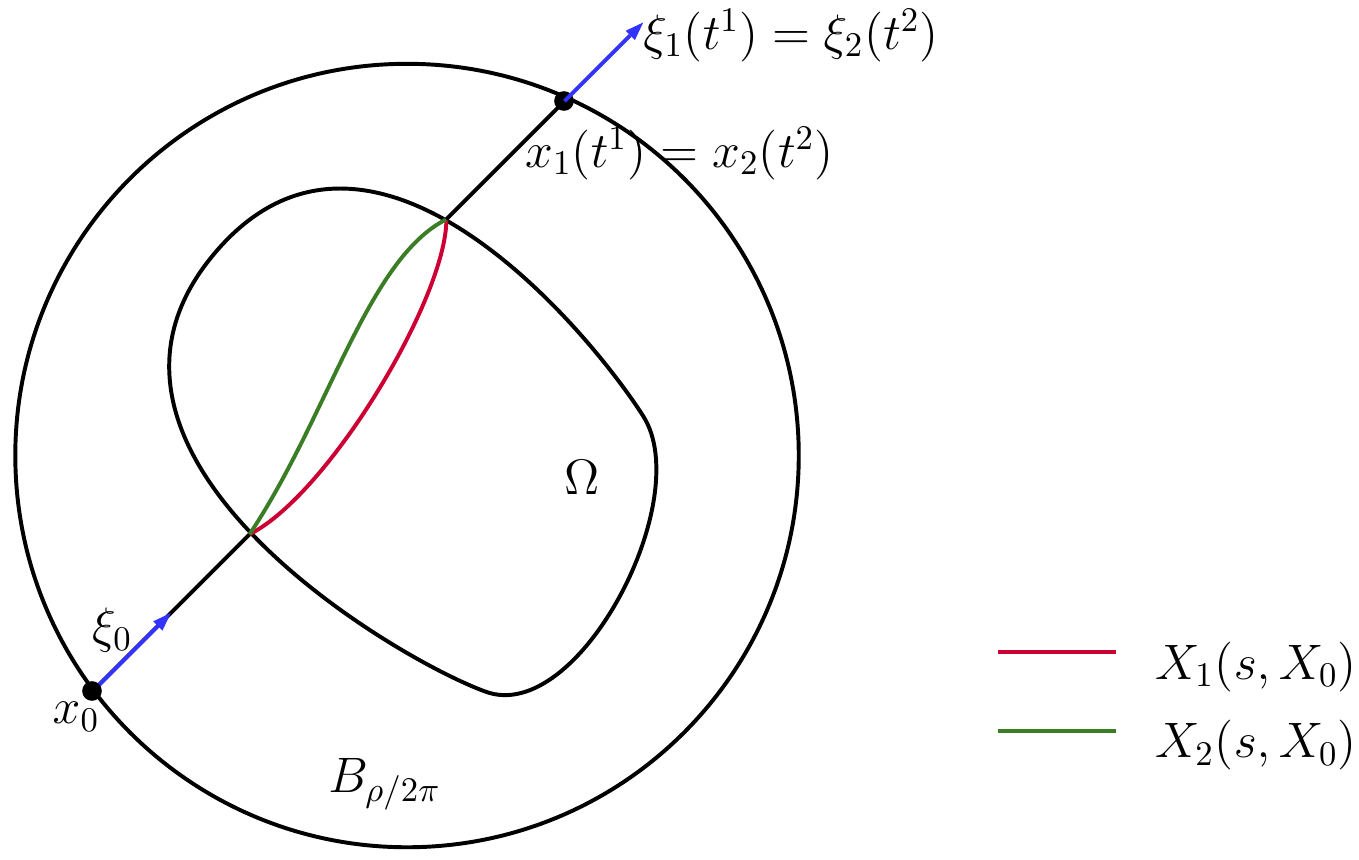} 
   \caption{Dipole Refraction off  inhomogeneities: The image of the trajectories of the centers of mass $X_j=(x_j,\xi_j)$ $(j=1,2)$ with the initial position $x_0$ and direction $\xi_0$.}
   \label{f:dipole}
\end{figure}

The inverse problem stated in Theorem~\ref{q1} is a way to generate an effective potential by the examination of the trajectories of sufficient numbers of dipoles.  An interesting example of where this  could be 
useful is in BEC's with random impurities generated by optical speckle potentials or quasi-periodic optical lattices \cite{FallaniLBcia}.  Their associated  trapping potentials with nontrivial characteristics  are not easily deduced \cite{ KondovSS2011TAlo, ModugnoGiovanni2010AliB, RoatiGiacomoAloa}, but such potentials could be inferred using the methods described, and they may also yield  measures of disorder in these impure BEC's \cite{PhysRevA.93.013607}.

\subsection{Methodology and relationship to the rigidity problem}\label{sec:mth}

The inverse problem we address here is to determine the background potential by knowing the first arrival time, the exit point and direction of the dipole's path if one knows its point and direction of entrance.

This inverse problem is closely related to the lens rigidity problem which consists of determining the Riemannian metric from the scattering relation
which measures, besides the travel times, the point and direction of exit of a geodesic from the manifold.
For simple metrics, it was observed by Michel \cite{Michel} that the lens rigidity problem is equivalent to the boundary rigidity one which consists of determining the metric by knowing the boundary distance function between boundary points. This kind of problem arose in geophysics in attempting to study the inner structure of the Earth by measuring the travel time of seismic waves going through the Earth. We refer to \cite{SUV} and the references therein for the recent development of the rigidity problem.

Our investigation on the reconstruction of the background potential from the dipole's trajectories is inspired by the results of Stefanov and Uhlmann \cite{SU3} on the boundary rigidity problem. They showed that the Riemannian metrics can be determined from the lengths of the geodesics if the metrics are close to the Euclidean one. An important step to recover  the metric in \cite{SU3} is the establishment of an identity which connects the difference of two Riemannian metrics with the scattering relations. 
A similar identity was used later to prove the stability estimate of the rigidity problem in \cite{W}. Moreover, a numerical algorithm was designed in \cite{CQUZ} for the isotropic index of refraction from travel time measurements. This identity works not only for geodesics which was considered in the works \cite{CQUZ, CQUZ2011, PSUZ, SU3, SUV, UW03, W}, but also for any kind of curves under suitable assumptions. 

This paper is mainly devoted to analyzing an anisotropic background potential using the trajectories of vortex dipoles. 
We first derive from (\ref{dipolepair}) and (\ref{dipolepair1}), the ODE system (\ref{e:xeqn}) and (\ref{e:xieqn}) with the Hamiltonian (\ref{Hamil}) which describes the travel trajectories of the centers of mass of the dipole.  
The reduction to an ODE system combined with a suitable assumption on exit information around the boundary of the domain leads to an identity which reveals that the variation in potentials is related to their scattering data. The main result we prove in this paper is the uniqueness of the potential. 
To make our approach clear, we start from considering a linearization of the problem and then discuss the setting for the potential in an inhomogeneous background. 
The idea in the linearized setting is to derive from \eqref{F2int1} that the integral along the trajectory of the difference of the two potentials is zero. Then we recover the potential by applying the standard Fourier transform. 
However, it is not an obvious path to extend the same method to the study of the potential in an inhomogeneous background. To address the anisotropic potential, we use a perturbation argument and the identity (\ref{F2int1}) to derive $L^2$ boundedness of a Fourier integral operator. Therefore, the uniqueness of the potential comes from the inversion of this operator.
 
This paper is organized as follows. In section \ref{sec:SU}, we study the key identity which links two background potentials with the travel information. The uniqueness of the potential for the linearized setting of the inverse problem is presented in section \ref{sec:linear}. We extend the idea from linearized case to the anisotropic setting and prove Theorem \ref{q1} in section \ref{nonlinear}. A reconstruction formula is given in section \ref{sec:recon}, with numerical examples in section \ref{sec:numer}.


\section{The key Stefanov-Uhlmann identity}\label{sec:SU}
In this section, we introduce an important identity from Stefanov and Uhlmann's paper \cite{SU3}. The identity derived in \cite{SU3} for the boundary rigidity problem relies on the hypotheses that two Riemannian metrics are close to the Euclidean metric and have the same boundary distance function. Since the boundary rigidity problem and the lens rigidity one are equivalent for simple metrics, if we assume the exit information and travel time are known, then the same identity can be derived in the same manner as in \cite{SU3}. We state this identity in the following lemma and include its proof for completeness. With this identity, we are able to derive the uniqueness of the background potential.

\begin{lemma} \label{lemmaid} 
Under the hypothesis in Theorem \ref{q1}, we have
 \begin{align} \label{F2int1}
       \int^t_0 {\partial X_2 \over \partial X^{(0)}} (t-s, X_1(s, X^{(0)}))(V_1 - V_2)(X_1(s,X^{(0)} ))ds = 0,
 \end{align}
where  
$$
 V_j := (\partial_\xi H_j, -\partial_x H_j)^T,\ \ j = 1,2 
$$
are the Hamiltonian vector fields. Here $t\equiv t(X^{(0)})=\max\{t^1,\ t^2\}$ with $t^1,t^2$ defined in (\ref{time}) for the initial starting data $X^{(0)}$ and the superscript $T$ stands for the transpose of a vector or matrix. 
\end{lemma}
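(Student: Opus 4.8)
The plan is to reproduce, in the present ODE setting, the Stefanov--Uhlmann argument from \cite{SU3}. Under the hypotheses of Theorem~\ref{q1} we have $t^1 = t^2 = t$ and $X_1(t,X^{(0)}) = X_2(t,X^{(0)})$; when $t = 0$ the identity \eqref{F2int1} is vacuous, so assume $t > 0$. The key object is the curve
\[
g(s) := X_2\big(t-s,\, X_1(s,X^{(0)})\big), \qquad s \in [0,t],
\]
obtained by flowing a point on the trajectory of the first system backward for time $t-s$ along the second system. First I would record two elementary facts about the flows: since $X_j(\cdot\,,Y)$ solves $\dot X_j = V_j(X_j)$ with $X_j(0,Y)=Y$, we have (i) $\partial_s X_j(s,Y) = V_j(X_j(s,Y))$, and (ii) differentiating the group law $X_2(r, X_2(\delta,Y)) = X_2(\delta, X_2(r,Y))$ with respect to $\delta$ at $\delta = 0$ yields
\[
\frac{\partial X_2}{\partial X^{(0)}}(r,Y)\, V_2(Y) = V_2\big(X_2(r,Y)\big).
\]

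Next I would differentiate $g$. By the chain rule, together with (i) applied to both $X_1$ and $X_2$,
\[
g'(s) = -\,V_2\big(X_2(t-s, X_1(s,X^{(0)}))\big) + \frac{\partial X_2}{\partial X^{(0)}}\big(t-s, X_1(s,X^{(0)})\big)\, V_1\big(X_1(s,X^{(0)})\big).
\]
Applying (ii) with $Y = X_1(s,X^{(0)})$ to rewrite the first term as $\tfrac{\partial X_2}{\partial X^{(0)}}(t-s,X_1(s,X^{(0)}))\,V_2(X_1(s,X^{(0)}))$, the two terms combine to
\[
g'(s) = \frac{\partial X_2}{\partial X^{(0)}}\big(t-s, X_1(s,X^{(0)})\big)\,(V_1-V_2)\big(X_1(s,X^{(0)})\big),
\]
which is exactly the integrand of \eqref{F2int1}. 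Evaluating the endpoints, $g(0) = X_2(t,X^{(0)})$ and $g(t) = X_2(0, X_1(t,X^{(0)})) = X_1(t,X^{(0)})$, so by the hypothesis $X_1(t,X^{(0)}) = X_2(t,X^{(0)})$ we get $g(t) = g(0)$ and hence $\int_0^t g'(s)\,ds = g(t)-g(0) = 0$, which is \eqref{F2int1}.

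The point requiring care --- and what I expect to be the main technical obstacle --- is showing that $g$ is well-defined and $C^1$ on all of $[0,t]$: one must know that the second flow $X_2(r,Y)$ exists, and stays in the region where $H_2$ is smooth, for every $Y$ of the form $X_1(s,X^{(0)})$ and every $r \in [0,t]$, and that $\partial X_2 / \partial X^{(0)}$ is continuous there. This is where the regularity $Q_0^j \in C^8$ and the smallness/flatness condition \eqref{appro3} enter. Conservation of $H_j$ along trajectories, combined with the boundedness of $Q_0^j$, pins $|\xi|$ to a fixed compact subset of $(0,\infty)$, so the singularity of $\tfrac{1}{2\pi}\log|\xi|$ is never approached and $\dot x$ stays bounded; outside $B_{\rho/2\pi}$ one has $\nabla Q_0^j \equiv 0$, so there the motion is along straight lines with constant $\xi$ and, by flatness, the vortices do not re-enter $B_{\rho/2\pi}$ before the exit times $t^j$. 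Hence every trajectory involved remains, on $[0,t]$, inside one fixed bounded region on which $V_1$ and $V_2$ are $C^1$, which gives global existence and $C^1$-dependence on initial data by standard ODE theory and justifies the chain rule computation above. Once these a priori bounds are in place, the rest is a routine differentiation.
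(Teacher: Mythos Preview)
Your proof is correct and follows essentially the same approach as the paper: define $F(s)=X_2(t-s,X_1(s,X^{(0)}))$, compute $F'(s)$ via the chain rule, simplify using the identity $\frac{\partial X_2}{\partial X^{(0)}}(r,Y)V_2(Y)=V_2(X_2(r,Y))$, and use $F(0)=F(t)$ from the hypothesis. The paper obtains that identity by differentiating $X(T-s,X(s,X^{(0)}))$ at $s=0$ rather than the group law, but this is equivalent; your additional paragraph on well-definedness and $C^1$ regularity is an elaboration the paper omits.
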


\begin{proof}
Let $F(s) = X_2 (t-s, X_1(s, X^{(0)}) )$. Here $t = t(X^{(0)})$ is independent of the potential $Q_0^1$ or $Q_0^2$.
From the hypothesis of Theorem \ref{q1}, the two trajectories of the centers of mass $X_j,\ j=1,2$ with the same initial $X^{(0)}$ have the same point and direction of exit and travel time. This implies $F(0) = X_2 (t, X^{(0)}) = X_1(t,X^{(0)}) = F(t)$. Hence, we have
\begin{align*}
    \int^t_0 F'(s) ds = 0.
\end{align*}
Moreover, the function $F'$ is computed as follows:
\begin{align*} 
    F'(s)
    & = -V_2 (X_2(t-s, X_1(s, X^{(0)}) ) + {\partial X_2 \over \partial X^{(0)}} (t-s, X_1(s,X^{(0)}) ) V_1 ( X_1(s,X^{(0)}) )\\
    & = {\partial X_2 \over \partial X^{(0)} } (t-s, X_1(s, X^{(0)}) ) (V_1 - V_2)( X_1 (s,X^{(0)}) ),
\end{align*}
where the second identity is due to the fact that
$$
       0 = \left. {d \over ds} \right|_{s = 0} X (T-s, X(s, X^{(0)})) = - V(X(T, X^{(0)})) + {\partial X \over \partial X^{(0)}}(T, X^{(0)}) V (X^{(0)})
$$
for all $T$. Therefore, the identity (\ref{F2int1}) holds.
\end{proof}

We make an adjustment to \eqref{F2int1} by replacing $t(X^{(0)})$ with the large, constant time $\tau^1=\tau^2=\tau$, see \eqref{tau}, since this parameter will commute with integrals.  We claim that $X_1(\tau,X^{(0)})=X_2(\tau,X^{(0)})$ in the above situation. Since $t(X^{(0)})=\max\{t^1,\ t^2\}$ is the final time the vortices interact with the background potential, $Q_0^j(x_j(s)\pm\xi_j^\perp(s))\equiv constant$ for all $s\ge t(X^{(0)})$, so the later positions at $s=\tau$ arise from those at $s=t(X^{(0)})$ via straight lines.  Since solutions are unique, the trajectories are identical after $t(X^{(0)})$.  Therefore,
\begin{align} \label{id:tau}
       \int^\tau_0 {\partial X_2 \over \partial X^{(0)}} (\tau-s, X_1(s, X^{(0)}))(V_1 - V_2)(X_1(s,X^{(0)} ))ds = 0 
 \end{align}
for all given $X^{(0)}$.

\section{The Linearized Case}\label{sec:linear}
In this section, the linearized version of this dipole inverse problem is studied. We will apply the identity (\ref{id:tau}) to derive the uniqueness of the background potential in this setting.

Let $\rho > 0$ be chosen so that $\overline{\Omega} \subset B_{ \rho/ 2 \pi}$, with $|\xi_0|=1$ and $z\cdot\xi_0=0$.  Let $g(x) = {1\over 2} \nabla  Q_0(x)$ and $g^\perp(x) = {1\over 2} \nabla^\perp  Q_0(x)$.  Similar to (\ref{e:xeqn}) and (\ref{e:xieqn}), we have the following Hamiltonian system:
 \begin{align}\label{e:xg}
     \left\{
         \begin{array}{ll}
        \dot{x}(s) = {1 \over 2 \pi} { \xi  \over |\xi|^2} - ( g^\perp(x + \xi^\perp) - g^\perp(x - \xi^\perp))&  \\
        x(0)= z - {\rho \over 2 \pi}\,\frac{\xi_0}{|\xi_0|} =: x_0& \\
        \end{array}
    \right.
 \end{align}
 and
\begin{align}\label{e:xig}
    \left\{
         \begin{array}{ll}
         \dot{\xi} (s) = -( g(x + \xi^\perp) + g(x - \xi^\perp))&\\
         \xi(0) = \xi_0.& \\
         \end{array}
     \right.
\end{align}

We first consider the dipole problem in the absence of the gradient of the background potential. Assume that $g(x) \equiv 0$. Then the solution to \eqref{e:xg}-\eqref{e:xig} is $X_e(s, X^{(0)}) = (x_e(s, X^{(0)}), \xi_e(s, X^{(0)}))$ with initial $X^{(0)} = (x_0, \xi_0)$. More precisely, the solution $X_e(s, X^{(0)})$ can be written as\footnote{Note that in \cite{SU3}, they have $X_e(s,X^{(0)}) = (x_0 + s \xi_0, \xi_0)$.}   
\begin{equation} \label{e:zeroth0}
X_e(s,X^{(0)}) = \left( x_0 + {s\over 2 \pi} { \xi_0 \over |\xi_0|^2} ,\  \xi_0 \right)
\end{equation}
for all $s$.
We formally differentiate $X_e(s,X^{(0)})$ with respect to $X^{(0)}$ and obtain a $4 \times 4$ matrix
\begin{equation*}
{\p X_e(s, X^{(0)}) \over \p X^{(0)}} 
= \begin{pmatrix}  I_2 &    s\,E(\xi_0)  \\  0 & I_2 \end{pmatrix},
\end{equation*} 
where
\eqal{\label{Emat}
E(\xi_0)=\frac{1}{2\pi}\left[{I_2 \over |\xi_0|^2} - { 2 \xi_0 \otimes  \xi_0 \over |\xi_0|^4}\right].
}

Following the argument in \cite{SU3}, we first do a formal linearization of the problem about the trivial state in \eqref{e:zeroth0}. For $\varepsilon > 0$,  the linearized expression is:
\begin{align*}
x^\e & =  x_0 + {s \over 2 \pi} { \xi_0 \over |\xi_0|^2} + \e \,\tilde x, \ \ \
\xi^\e = \xi_0 + \e \,\tilde \xi, \\
X^\e & = (x^\e, \xi^\e),\ \ \
g^\e  = \e \tilde g.
\end{align*}
The difference of the gradient of two potentials $Q^j_0$, $j=1,2$, is denoted by
\[
m^\e = g^\e_1 - g^\e_2 = \e \tilde m. 
\]
The Fr\'echet derivative on our identity \eqref{id:tau} yields
\begin{align*}
0 = \int_{0}^\tau \left. {d \over d \e} \right|_{\e = 0} \LB { \p X_2^\e \over \p X^{(0)}} ( \tau - s, X_1^\e(s, X^{(0)})) \cdot ( V_2^\e - V_1^\e)(X_1^\e(s, X^{(0)})) \RB ds.
\end{align*}

To compute this, first note that 
\[
\left.( V_2^\e - V_1^\e)(X_1^\e(s,X^{(0)}))  \right|_{\e = 0} = \begin{pmatrix} 0 \\ 0 \end{pmatrix}, 
\]
so we only need to take the variation with respect to the difference $V_2^\e - V_1^\e$.  In particular, we have 
\begin{align*}
  \tilde V(s, x_0, \xi_0)  := &\left. {d \over d \e} \right|_{\e = 0} \LB ( V_2^\e - V_1^\e)(X_1^\e(s, X^{(0)})) \RB\\
  = &\begin{pmatrix} 
 \tilde m^\perp (x_e + \xi_0^\perp) - \tilde m^\perp ( x_e - \xi_0^\perp) \\
 \tilde m (x_e + \xi_0^\perp) + \tilde m ( x_e - \xi_0^\perp)
 \end{pmatrix},
\end{align*}
where $x_e=x_e(s,X^{(0)})$ is as in \eqref{e:zeroth0}.  Therefore, the final linearized identity is
\begin{align}\label{3:mid}
\begin{split}
& \int_{0}^\tau \begin{pmatrix}      I_2 &  (\tau-s)E(\xi_0)  \\  0 & I_2 \end{pmatrix}
 \tilde V(s, x_0, \xi_0) ds = 0.
\end{split}
\end{align}

With the above linearized identity, we will be able to show uniqueness for the linearizing dipole problem. The result is stated as follows. 
\begin{theorem}
Suppose that $\supp \LC  \tilde m \RC \subset \Omega$. If \eqref{3:mid} holds for all $ \xi_0\in \mathbb{S}^1$ and $z\in\re\,\xi_0^\perp$,
then $\tilde m=0$ in $\Omega$.  
\end{theorem}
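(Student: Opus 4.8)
The plan is to use only the bottom $2\times 2$ block of the vector identity \eqref{3:mid}, which decouples from the top block since $E(\xi_0)$ multiplies only the ``position'' components. Reading off the last two components gives
\begin{align*}
\int_0^\tau \Big[\tilde m\big(x_e(s,X^{(0)}) + \xi_0^\perp\big) + \tilde m\big(x_e(s,X^{(0)}) - \xi_0^\perp\big)\Big]\, ds = 0
\end{align*}
for every admissible $(x_0,\xi_0)$. Since $|\xi_0|=1$ and $x_0 = z - \tfrac{\rho}{2\pi}\xi_0$ with $z = c\,\xi_0^\perp$, \eqref{e:zeroth0} gives $x_e(s,X^{(0)}) = c\,\xi_0^\perp + \tfrac{s-\rho}{2\pi}\,\xi_0$. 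Substituting $r = \tfrac{s-\rho}{2\pi}$ and using that $\tilde m$ is supported in the bounded set $\Omega$ — so that the integrand, as a function of $s\in\re$, is in fact supported inside $[0,\tau]$ — one may extend the $s$-integral to all of $\re$ and obtain, for all $\xi_0\in\mathbb{S}^1$ and $c\in\re$,
\begin{align*}
\int_\re \tilde m\big((c+1)\xi_0^\perp + r\,\xi_0\big)\, dr + \int_\re \tilde m\big((c-1)\xi_0^\perp + r\,\xi_0\big)\, dr = 0.
\end{align*}

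Next I would run a periodicity argument to recover the honest line integrals of $\tilde m$. Fix $\theta=\xi_0\in\mathbb{S}^1$ and set $f(c) = \int_\re \tilde m(c\,\theta^\perp + r\,\theta)\, dr \in \re^2$, the X-ray transform of $\tilde m$ over the line at signed distance $c$ from the origin in direction $\theta$. The previous display reads $f(c+1)+f(c-1)=0$ for all $c$, hence $f(c+2) = -f(c)$ and $f(c+4)=f(c)$: $f$ is $4$-periodic. But $\supp(\tilde m)\subset\Omega$ is bounded, so $f$ has compact support; a periodic function with compact support vanishes identically. Therefore $\int_\re \tilde m(c\,\theta^\perp + r\,\theta)\, dr = 0$ for every line, i.e. the scalar X-ray transform of each component $\tilde m_1,\tilde m_2$ of $\tilde m$ vanishes.

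Finally I would invert the X-ray transform by the Fourier slice theorem, as anticipated in Section~\ref{sec:mth}: for each component $i$, taking the one-dimensional Fourier transform in $c$ of $c\mapsto \int_\re \tilde m_i(c\,\theta^\perp + r\,\theta)\, dr$ and changing variables by the rotation $(c,r)\mapsto c\,\theta^\perp + r\,\theta$ yields $\widehat{\tilde m_i}(\lambda\,\theta^\perp)$; since this vanishes for all $\lambda\in\re$ and $\theta\in\mathbb{S}^1$, $\widehat{\tilde m_i}\equiv 0$, whence $\tilde m_i\equiv 0$. Thus $\tilde m = 0$, a fortiori in $\Omega$.

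\textbf{Main obstacle.} The computations are routine; the two points needing care are (i) justifying the extension of the $[0,\tau]$-integral to $\re$, i.e. that along the unperturbed straight-line trajectory the points $x_e(s)\pm\xi_0^\perp$ lie outside $\supp(\tilde m)$ for $s\notin[0,\tau]$ — this uses that $x_0\pm\xi_0^\perp$ already lie on or outside $B_{\rho/2\pi}$ (since $\big|(c\pm1)\xi_0^\perp - \tfrac{\rho}{2\pi}\xi_0\big|\ge\tfrac{\rho}{2\pi}$), hence outside $\overline\Omega$, together with convexity of the ball and the definition \eqref{tau} of $\tau$; and (ii) the observation that, although the X-ray transform of a \emph{vector field} has a large kernel (exact fields), here one actually controls the line integral of the full $\re^2$-valued $\tilde m$, hence the scalar transform of each component separately, which is injective — so no gradient or Helmholtz structure of $\tilde m$ is needed for this linearized statement.
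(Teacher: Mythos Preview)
Your proof is correct and takes a cleaner route than the paper's. Both arguments extend the $s$-integral to all of $\re$ and read off the bottom block $\int_\re[\tilde m(x_e+\xi_0^\perp)+\tilde m(x_e-\xi_0^\perp)]\,ds=0$. From here the paper passes to the Fourier side, obtaining $\cos(|\eta|)\,\widehat{\tilde m}(\eta)=0$, and then brings in the \emph{top} block (equation \eqref{ftL}) together with the identity $(\xi_\eta\cdot\nabla_\eta)|\eta|=0$ to upgrade this to $\widehat{\tilde m}(\eta)=0$ for all $\eta\neq 0$. Your periodicity-plus-compact-support argument achieves the same upgrade using only the bottom block: in Fourier language, $f(c+1)+f(c-1)=0$ is $\cos(\lambda)\hat f(\lambda)=0$, and compact support of $f$ (hence analyticity of $\hat f$) forces $\hat f\equiv 0$. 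This is more elementary and shows that the top block is unnecessary for the linearized statement; the paper's use of both blocks is presumably a rehearsal for the nonlinear argument in Section~\ref{nonlinear}, where the coupling genuinely matters. One minor wording point: the top block does not ``decouple'' from the bottom --- it contains $E(\xi_0)$ times the $\xi$-components --- rather, the bottom block is independent of the top, which is what you actually exploit.
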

Note that $\re\xi_0^\perp=\{\lambda \xi^\perp_0:\lambda\in\re\}$.

\begin{proof}
Since the dipole pair will exit the ball $B_{\rho/2\pi}$ after time $\tau$ by the definition of $\tau$, and $\supp \LC \tilde m \RC \subset B_{\rho/ 2\pi}$, we can extend the integration region of (\ref{3:mid}) to $(-\infty, \infty)$. Hence, we have
\begin{align*} 
& \int_\re
\begin{pmatrix}      
I_2 & (\tau-s)E(\xi_0)  \\  
0 & I_2 \end{pmatrix}
\begin{pmatrix} 
\tilde m^\perp (x_e + \xi_0^\perp) - \tilde m^\perp ( x_e - \xi_0^\perp) \\
\tilde m (x_e + \xi_0^\perp) + \tilde m ( x_e - \xi_0^\perp) 
\end{pmatrix}ds = 0.
\end{align*}
Note that $x_e(s) = z - {\rho\over 2\pi} {\xi_0\over|\xi_0|} + {s \over 2\pi} {\xi_0 \over |\xi_0|^2}$. Let $\eta\in\re^2\setminus\{0\}$ satisfy $\eta \cdot \xi_0 = 0$; clearly, $\eta\cdot z=\eta\cdot x_e$. Taking the Fourier transform of the above integral with respect to $z$, we obtain
\begin{align*} 
& \int_{z\cdot\xi_0=0}\int_\re e^{-i\eta\cdot x_e(s,X^{(0)})}
\begin{pmatrix}      
I_2 & (\tau-s)E(\xi_0)  \\  
0 & I_2 
\end{pmatrix}
\begin{pmatrix} 
\tilde m^\perp_+ - \tilde m^\perp_- \\
\tilde m_+ + \tilde m_- 
\end{pmatrix}ds\,dS_z = 0,
\end{align*}
where $\tilde m_\pm=\tilde m(x_e\pm\xi_0^\perp)$.  Choose $\xi_0^\bot\equiv\xi_\eta^\bot=\eta/|\eta|$, where the subscript emphasizes dependence on $\eta$. Let $(s,z)\to x_e$ via
\eqal{\label{sz}
s&= |\xi_\eta| \,\rho+2\pi\xi_\eta\cdot x_e,\\
z&=x_e-\frac{\xi_\eta\cdot x_e}{|\xi_\eta|^2}\,\xi_\eta,
}
we get
\begin{align*}
\int e^{-i\eta\cdot x_e}(\tilde m_+-\tilde m_-)dx_e&=E^\bot(\xi_\eta)\int e^{-i\eta\cdot x_e}(\tau-s(x_e))(\tilde m_++\tilde m_-)dx_e,\\
\int e^{-i\eta\cdot x_e}(\tilde m_++\tilde m_-)dx_e&=0,
\end{align*}
where $E^\bot=\begin{pmatrix}0&1\\-1&0\end{pmatrix}E$ satisfies $E^\bot m=(Em)^\bot$.  Adding and subtracting these equations:
\begin{align}\label{comp}
\int e^{-i\eta\cdot x_e}\tilde m(x_e\pm\xi_\eta^\bot)dx_e&=\pm\frac{1}{2}\,E^\bot(\xi_\eta)\int e^{-i\eta\cdot x_e}(\tau-s(x_e))[\tilde m(x_e+\xi^\bot_\eta)+\tilde m(x_e-\xi^\bot_\eta)]dx_e.
\end{align}
We change variables $y_\pm=x_e \pm\xi_\eta^\bot$ in the appropriate integrals such that $\tilde m_\pm=\tilde m(y_\pm)$.  Noting that $s(x_e)=s(y_\pm)$ and $e^{-i\eta\cdot x_e}=e^{\pm i|\eta|-i\eta\cdot y_\pm}$, we get
\[
e^{\pm i|\eta|}\int e^{-i\eta\cdot y_\pm}\tilde m(y_\pm)dy_\pm=\pm{E}^\bot(\xi_\eta)\cos(|\eta|)\int e^{-i\eta\cdot y}(\tau-s(y))\tilde m(y)dy,\qquad \pm=+,-,
\]
which implies
\eqal{\label{ftL}
e^{\pm i|\eta|}\widehat{\tilde m}(\eta)&=\pm E^\bot(\xi_\eta)\,[(\tau-\rho)\cos(|\eta|)\,\widehat{\tilde m}(\eta)-2\pi i\,\cos(|\eta|)\,(\xi_\eta\cdot\nabla_\eta)\widehat{\tilde m}(\eta)],
}
where we use that $\tau$ is a constant and denote the Fourier transform of $f$ by $\widehat{f}$.  Since
\eqal{\label{cos0}
\cos(|\eta|)\,\widehat{\tilde m}(\eta)=0, 
}
the first term in \eqref{ftL} vanishes.  Moreover, if we apply $\xi_\eta\cdot \nabla_\eta$ to \eqref{cos0}, then we obtain
$$
0=(\xi_\eta\cdot\nabla_\eta)(\cos(|\eta|)\,\wh{\tilde m}(\eta))=\cos(|\eta|)\,(\xi_\eta\cdot\nabla_\eta)\wh{\tilde m}(\eta),
$$
which implies the second term in \eqref{ftL} also vanishes. Thus, $\widehat{\tilde m}(\eta)=0$ on $\re^2\setminus\{0\}$.  Since $\wh{\tilde m}(\eta)=\frac{1}{2\eps}\,i\eta\,(\wh Q^1_0(\eta)-\wh Q^2_0(\eta))$, we conclude $\wh{\tilde m}(\eta)=0$ on $\re^2$.  Hence, $\tilde m\equiv 0$.

\end{proof}


\section{The reconstruction of the anisotropic background potential}\label{nonlinear}
 
In this section, we investigate the dipole inverse problem in an inhomogeneous background. In this setting, the background potential is not a constant; therefore, from the ODE system (\ref{e:xg}) and (\ref{e:xig}), we observe that the dipole's trajectories are affected by potentials; consequently, dipoles do not, in general, travel in straight lines. 

\subsection{The weakly nonlinear inverse problem}
Suppose that we have two background potentials $Q_0^j$ for $j=1, 2$. The gradients $g_j=\frac{1}{2}\nabla Q_0^j$ satisfy
$$
     g_1(z)-g_2(z)\in C^{k-1}_0(\Omega)\ \ \hbox{and}\ \ \|g_j\|_{C^{k-1}(\Omega)}\leq \varepsilon
$$ 
for $j=1,2$.
We extend $g_1$ and $g_2$ to $\mathbb{R}^2$ such that $g_1=g_2=0$ in $\mathbb{R}^2\setminus B_{\rho /2\pi}$,  and $g_j\in C^{k-1}(\mathbb{R}^2)$ for $j=1,2$. The extended functions $g_j$ satisfy
\begin{align}\label{appro1}
      g_1-g_2\in{C^{k-1}_0( \mathbb{R}^2 )} \ \ \hbox{and}\ \  \|g_j\|_{C^{k-1}( \mathbb{R}^2)}\leq \varepsilon.
\end{align}
From now on, we use the same symbols $g_1$ and $g_2$ to denote the extended gradient of the background potentials $Q_0^j$ which satisfy the above conditions.

Let $X_j=(x_j, \xi_j)$, for $j=1,2$, be the solutions to (\ref{e:xg}) and (\ref{e:xig}) with $g$ replaced by $g_j$, respectively. Recall that if $g \equiv 0$, then from (\ref{e:zeroth0}) the solution is $X_e(s, X^{(0)}) = (x_0 + {s\over 2\pi}{\xi_0 \over |\xi_0|^2},\ \xi_0 )$ with $x_0=z-\frac{\rho}{2\pi}\frac{\xi_0}{|\xi_0|}$ and $z\cdot \xi_0=0$. For general functions $g_j \neq 0$ that satisfy (\ref{appro1}),
the solutions $X_1$ and $X_2$ can be expressed as small perturbations of the solution $X_e$: 
\begin{align}\label{est:x}
    x_j(s,X^{(0)}) = x_0 + {s\over 2\pi}{\xi_0 \over |\xi_0|^2}+O(\varepsilon)\ \ \hbox{in $C^{k-1}$}
\end{align}
and 
\begin{align}\label{est:xi}
   \xi_j(s,X^{(0)}) = \xi_0+O(\varepsilon) \ \ \hbox{in $C^{k-1}$,}
\end{align}
where the $C^j$ norm is interpreted with respect to variables $s,\ z,\ \xi_0 $ and $O(\varepsilon)$ is a function with norm bounded by $C\varepsilon$ with a constant $C > 0$ uniformly in any fixed compact set. 

Taking the derivatives of (\ref{est:x}) and (\ref{est:xi}) with respect to variables $X^{(0)} = (x_0,\xi_0)$, it follows that 
$$
     \LN {\partial X_{2}  \over \partial X^{(0)} } - 
     \left(\begin{array}{cc}
         I_2 &  s\,E(\xi_0)   \\
         0 &   I_2   \\
        \end{array}\right)
       \RN_{C^{k-2}}=O(\varepsilon),
$$
where $E$ is as in \eqref{Emat}.
We then obtain
\begin{align}\label{Bs} 
       {\partial X_{2} \over \partial X^{(0)} }(\tau-s, X_{1}(s, X^{(0)})) 
      = \left(\begin{array}{cc}
               I_2 &  (\tau-s)\,E(\xi_0)   \\
               0 &   I_2   \\
              \end{array}\right)
       +B(s)
\end{align} in \eqref{id:tau} with the $4\times 4$ matrix
 \begin{align} \label{Bs1}
     B(s)=B(s,X^{(0)}; Q_0^1,Q_0^2)=
     \left(\begin{array}{cc}
                       B_{11} &   B_{12} \\
                       B_{21} &   B_{22}   \\
     \end{array}\right)
     =O(\varepsilon)\ \ \ \hbox{in $C^{k-2}$}.
 \end{align}    
Here, $\tau$ in \eqref{tau} is the largest possible time $s$ for the vortices to exit $B_{\rho/2\pi}$.

We denote $\m=g_1-g_2$. From the assumptions (\ref{appro1}) on $g_j$, we get $\m \in C^{k-1}_0(\R^2)$ and $\supp(\m) \subset \Omega$. The difference of the Hamiltonian vector fields is
\begin{align}\label{V12}
            &(V_2 -V_1)(X_{1}(s,X^{(0)})) \notag\\ 
            &=\left( \begin{array}{c}
                \m^\perp(x_1+\xi_1^\perp)-\m^\perp(x_1-\xi_1^\perp)\\
                \m(x_1+\xi_1^\perp)+\m(x_1-\xi_1^\perp)\\
            \end{array}\right)  
            =: \left( \begin{array}{c}
             \textbf{M}_1(x_1,\xi_1^\perp)\\  
             \textbf{M}_2(x_1,\xi_1^\perp) \\
             \end{array}\right).
\end{align} 
Recalling \eqref{id:tau}, we can extend the integration region to the whole real line, since $\supp (\m)\subset\Om$. Therefore, we have 
 \begin{align}\label{intrho}
       \int_\re {\partial X_{2}\over \partial X^{(0)}}(\tau-s, X_{1}(s, X^{(0)}))\left( \begin{array}{c}
                    \textbf{M}_1(x_1,\xi_1^\perp)\\  
                    \textbf{M}_2(x_1,\xi_1^\perp) \\
                    \end{array}\right)ds=0,
 \end{align}
where $\partial X_2/\partial X^{(0)}$ is evaluated to $O(\varepsilon)$ in \eqref{Bs}.  Recall $x_e=z-{\rho\over 2\pi} {\xi_0\over |\xi_0|}+\frac{s }{2\pi}\frac{\xi_0}{|\xi_0|^2}$, where $z\cdot \xi_0=0$.  Let $\eta\in\re^2$ satisfy $\eta\cdot\xi_0=0$ and taking the Fourier transform in $z$ give
\begin{align*}
\int_{z\cdot\xi_0=0}\int_\R e^{-i\eta\cdot x_e}
\left(
\begin{pmatrix}
I_2& (\tau-s)E(\xi_0)\\
0&I_2
\end{pmatrix}+
\begin{pmatrix}
B_{11}& B_{12}\\
B_{21}&B_{22}
\end{pmatrix}\right)
\begin{pmatrix}
\M_1(x_1,\xi^\bot_1)\\
\M_2(x_1,\xi^\bot_1)
\end{pmatrix}
d s\,d S_z=0.
\end{align*}
Let $(s,z)\to x_e$ be as in \eqref{sz}. We obtain a system of equations:
\eqal{\label{sys}
\int_{\re^2} e^{-i\eta\cdot x_e}(\m(x_1+\xi_1^\bot)-\m(x_1-\xi_1^\bot))d x_e&=-(W^1\m+W^3\m)^\bot,\\
\int_{\R^2}e^{-i\eta\cdot x_e}(\m(x_1+\xi_1^\bot)+\m(x_1-\xi_1^\bot))d x_e&=W^2\m,
}
where $M_i=M_i(x_1,\xi_1^\bot)$ for $x_1 = x_1(x_e, \xi_0)$ and $\xi_1 = \xi_1(x_e, \xi_0)$, and the (small) terms on the right hand side are given by
\eqal{\label{Dis}
W^j\m(\eta)&=-\int_{\R^2}e^{-i\eta\cdot x_e}(B_{jk}\M_k)d x_e,j=1,2,\\
W^3\m(\eta)&=-E(\xi_0)\int_{\re^2}e^{-i\eta\cdot x_e}(\tau-\rho-2\pi\xi_0\cdot x_e)\M_2d x_e.
}
Adding and subtracting the equations in \eqref{sys} yields
\eqal{\label{eq}
\int e^{-i\eta\cdot x_e}\m(x_1\pm \xi_1^\bot)dx_e=\frac{1}{2}[W^2\m\mp(W^1\m+W^3\m)^\bot],
}
which contains small terms compared to \eqref{comp}. Our objective is to show that \eqref{eq} implies $\m=0$ if $\eps$ is small enough.

In section \ref{ana_F}, we simplify the left hand side (LHS) of \eqref{eq} by using a change of variables and recasting the result in terms of oscillatory integrals (OIs).  The right hand side (RHS) of \eqref{eq} is studied in section \ref{ana_Wj} where we employ a similar change of variables and show that these terms are small due to the $O(\e)$ hypothesis. Our uniqueness theorem follows directly in section \ref{unique}.

\subsection{Presentation in terms of OIs}\label{ana_F}
As in the linearized case, we will set $\xi_0=-\eta^\bot/|\eta|$.  In order to remove the resulting singularity at $\eta=0$ in our OI amplitudes, we first multiply by a cutoff function.  

Let $\chi_a(\eta)=\chi(|\eta|/a)\in C^\infty([0,\infty))$ be a cutoff function such that $\chi(x)=1$ for $x\ge 1$ and $\chi(x)=0$ for $0\le x\le 1/2$, where $a>0$ is a small parameter depending on $\eps$ to be chosen later. 
We denote
\begin{align}\label{Fm_int}
     \mathcal{F}_\pm \textbf{m}(\eta): = \int e^{-i\eta\cdot x_e}\chi_a(\eta)\m(x_1\pm \xi_1^\bot)dx_e,\ \ \pm=+,-,
\end{align}
then multiplying $\chi_a(\eta)$ on both sides of \eqref{eq} leads to
\begin{align}\label{eqcutoff}
    \mathcal{F}_\pm \textbf{m}(\eta)=\frac{1}{2}[\mathcal{W}^2\m\mp(\mathcal{W}^1\m+\mathcal{W}^3\m)^\bot],
\end{align} 
where 
$\mathcal{W}^j = \chi_a(\eta) W^j$.

Let us analyze the operators $\mathcal{F}_\pm$. We make the following changes of variables in the integral (\ref{Fm_int}) (recall \eqref{est:x} and \eqref{est:xi}):
\eqal{\label{cov}
x_e\to y&=x_1(x_e,\xi_0)\pm \xi_1^\bot(x_e,\xi_0)\\
&=x_e\pm\xi^\bot_0+O(\eps)\text{ in }C^{k-1},\\
x_e^\pm(y,\xi_0)&=\mp \xi^\bot_0+\phi_\pm(y,\xi_0),\\
&=\mp\xi^\bot_0+y+O(\eps)\text{ in }C^{k-1}.
}

The associated Jacobians are given by $J^{-1}_\pm(y,\xi_0)=1+O(\eps)$ in $C^{k-2}$, so we have
\begin{align}\label{Fpm}
   \mathcal{F}_\pm \textbf{m}(\eta) 
   &= \int e^{-i\eta\cdot x^\pm_e(y,\xi_0)}\m(y)J^{-1}_\pm(y,\xi_0)dy \notag\\
   &= \int e^{-i\eta\cdot \phi_\pm(y,\xi_\eta)}e^{\pm i|\eta|}\chi_a(\eta)\m(y)J^{-1}_\pm(y,\xi_\eta)dy,
\end{align}
where we can set 
$\xi_0$ to be
\eqal{\label{xi0}
\xi_\eta\equiv\xi_0=-\frac{\eta^\bot}{|\eta|}=\frac{1}{|\eta|}(-\eta_2,\eta_1),\qquad \eta\neq 0
}
due to $\eta\cdot\xi_0=0$.

Furthermore, the amplitudes of $\mathcal{F}_\pm$ can be described by the following symbol class.
\begin{definition}
 We say that $a(x,y,\xi)\in S^m_k$ if there exists a constant $C\geq 0$, such that 
 $$
      |\partial^\alpha_x\partial^\beta_y\partial^\gamma_\xi a(x,y,\xi)|\leq C|\xi|^{m-|\gamma|},\ \ |\alpha|+|\beta|+|\gamma|\leq k,
 $$
 for $x, \ y\in B_{\rho/2\pi}$ and $\xi\in \mathbb{R}^2\setminus \{0\}$. 
\end{definition}
It follows from \eqref{cov} and \eqref{Fpm} that $\mc F_\pm$ has the amplitude $\chi_a(\eta)J^{-1}_\pm(y,\xi_\eta)$ in $S^0_{k-2}$ , since
\eqal{\label{covs}
\phi_\pm(y,\xi_\eta)=y+O(\eps)\text{ in }S^0_{k-1},\\
J^{-1}_\pm(y,\xi_\eta)=1+O(\eps)\text{ in }S^0_{k-2}.
}

\subsection{The analysis of $\mathcal{W}^j$.}\label{ana_Wj}
Let us next characterize the integrals $\mathcal{W}^j\m=\chi_a(\eta)W^j\m$ for $j=1,2,3$.

Recall the definition of $W^j$ in \eqref{Dis}; then we have
\begin{align*} 
    \mathcal{W}^{j}\m (\eta)
    & =\int e^{-i\eta\cdot x_e}B^+_j(x_e,\xi_0)\m(x_1(x_e,\xi_0) + \xi^\bot_1(x_e,\xi_0))dx_e\\
    & \quad +\int e^{-i\eta\cdot x_e}B^-_j(x_e,\xi_0)\m(x_1(x_e,\xi_0) - \xi^\bot_1(x_e,\xi_0))dx_e,\ \  j=1,2,
\end{align*}
and 
\begin{align}\label{W3}
    \mathcal{W}^3\m (\eta) = -E(\xi_0)(\tau-\rho)\mathcal{W}^2\m(\eta)+2\pi E(\xi_0)\int e^{-i\eta\cdot x_e}(\xi_0\cdot x_e) \chi_a(\eta)\M_2dx_e,
\end{align}
where $B^\pm_j$ is a matrix function whose entries is a combination of entries of $B(s)$, such that $B^\pm_j$ is still $O(\eps)$ in $C^{k-2}$, see \eqref{Bs1}.

We first consider $\mathcal{W}^j$ for $j=1,2$ since they only contain small terms in their amplitudes. 
We apply the same changes of variables (\ref{cov}) to $\mathcal{W}^j$. Fixing $\xi_0$ as in \eqref{xi0}, we obtain
\begin{align*}
     \mathcal{W}^j\m(\eta)
     & =\int e^{-i\eta\cdot \phi_+(y,\xi_\eta)}e^{ i|\eta|}\chi_a(\eta)B^+_j(x_e(y,\xi_\eta),\xi_\eta)\m(y)J^{-1}_+(y,\xi_\eta)dy\\
     &\quad+\int e^{-i\eta\cdot \phi_-(y,\xi_\eta)}e^{ -i|\eta|}\chi_a(\eta)B^-_j(x_e(y,\xi_\eta),\xi_\eta)\m(y)J^{-1}_-(y,\xi_\eta)dy,\ \ j=1,2,
\end{align*}
where the amplitude matrices $\chi_a(\eta)B^\pm_jJ^{-1}_\pm$ in each integral are $O(\eps)$ in $S^0_{k-2}$.

Now we turn to operator $\mathcal{W}^3$. The first term of $\mathcal{W}^3 \m(\eta)$ is also $O(\varepsilon)$ in $S^0_{k-2}$ because $E(\xi_\eta)$ is homogeneous of degree 0 in $\eta$ and $\mathcal{W}^2\m(\eta)$ is $O(\varepsilon)$ in $S^0_{k-2}$. So we need only study the second term:
\begin{align}\label{W4}
    \mathcal{W}^4\m(\eta) := \int e^{-i\eta\cdot x_e}\,(\xi_0\cdot x_e)\,[\m(x_1+ \xi^\bot_1)+\m(x_1- \xi^\bot_1)]dx_e.
\end{align}
Applying the change of variables \eqref{cov} as before and noting that $\xi_\eta\cdot x_e^\pm=\xi_\eta\cdot\phi_\pm$, we get
\begin{align}\label{2ndterm}
    \mathcal{W}^4\m (\eta)
    =& \int e^{-i\eta\cdot x_e}\,(\xi_\eta\cdot x_e)\,[\m(x_1+ \xi^\bot_1)+\m(x_1- \xi^\bot_1)]dx_e  \notag\\
    = &\int \,e^{-i\eta\cdot \phi_+(y,\xi_\eta)+ i|\eta|}\,[\xi_\eta\cdot\phi_+(y,\xi_\eta)]\,\chi_a(\eta)\,\m(y)\,J^{-1}_+(y,\xi_\eta)dy  \notag\\
    & + \int \,e^{-i\eta\cdot \phi_-(y,\xi_\eta)- i|\eta|}\,[\xi_\eta\cdot\phi_-(y,\xi_\eta)]\,\chi_a(\eta)\,\m(y)\,J^{-1}_-(y,\xi_\eta)dy.
\end{align}

\begin{lemma}
    The operator $\mathcal{W}^4$ has amplitudes of $O(\eps/a)$ in $S^0_{k-3}$.
\end{lemma}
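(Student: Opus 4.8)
The plan is to estimate the amplitude functions appearing in the two oscillatory integrals of \eqref{2ndterm}. In each term the amplitude is, up to the harmless factor $e^{\pm i|\eta|}$, the product $[\xi_\eta\cdot\phi_\pm(y,\xi_\eta)]\,\chi_a(\eta)\,J^{-1}_\pm(y,\xi_\eta)$. By \eqref{covs} we already know $J^{-1}_\pm = 1+O(\eps)$ in $S^0_{k-2}$ and $\phi_\pm(y,\xi_\eta)=y+O(\eps)$ in $S^0_{k-1}$, so the only issue is the growth of the scalar factor $\xi_\eta\cdot\phi_\pm$ in $\eta$ and, more importantly, the effect of differentiating it in $\eta$.

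First I would observe that $\xi_\eta\cdot\phi_\pm(y,\xi_\eta)$ itself is bounded (it is $\xi_\eta\cdot y + O(\eps)$, and $y$ ranges over the bounded ball $B_{\rho/2\pi}$), so as a zeroth-order object it lies in $S^0_{k-3}$ without any $a$-dependence. The loss of a factor $1/a$ comes entirely from $\eta$-derivatives: $\xi_\eta=-\eta^\perp/|\eta|$ is homogeneous of degree $0$, so $\partial_\eta\xi_\eta = O(1/|\eta|)$, and on the support of $\chi_a(\eta)$ one has $|\eta|\ge a/2$, hence $|\partial_\eta\xi_\eta|\le C/a$. Thus each $\eta$-derivative of the factor $\xi_\eta\cdot\phi_\pm$ costs at most $1/a$ rather than the $1/|\eta|$ that the symbol class $S^0$ would allow, which is exactly the asserted degradation: the amplitude is $O(\eps/a)$ rather than $O(\eps)$ because, unlike the amplitudes in $\mathcal{W}^1,\mathcal{W}^2$, the extra factor $\xi_\eta\cdot\phi_\pm$ is not intrinsically small — it is $O(1)$ — so it must be absorbed into the $O(\eps)$ coming from somewhere, and the cleanest bookkeeping is to note that $\mathcal{W}^4$ appears in $\mathcal{W}^3$ multiplied by $E(\xi_\eta)$ and that $\mathcal{W}^3$ enters \eqref{eqcutoff} only through the combination that is ultimately compared against the $O(\eps)$ data; tracking constants, one sees the net size is $\eps/a$. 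Concretely, I would verify by the product rule that for $|\alpha|+|\beta|+|\gamma|\le k-3$,
\[
\left|\partial_y^\alpha\partial_\eta^\gamma\big([\xi_\eta\cdot\phi_\pm(y,\xi_\eta)]\,\chi_a(\eta)\,J^{-1}_\pm(y,\xi_\eta)\big)\right|\le \frac{C\eps}{a}\,|\eta|^{-|\gamma|},
\]
the $\eps$ coming from the $J^{-1}_\pm - 1$ and $\phi_\pm - y$ corrections (the $\eps$-independent leading piece $\xi_\eta\cdot y\,\chi_a$ being handled separately and seen to be absorbed when $\mathcal{W}^4$ is reassembled into the identity), and the $1/a$ from differentiating $\xi_\eta$ or $\chi_a(\eta)=\chi(|\eta|/a)$, each derivative of which produces a factor $1/a$ supported where $|\eta|\sim a$.

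The main obstacle, and the point that requires care rather than routine estimation, is the interplay between the two small parameters: the factor $\xi_\eta\cdot\phi_\pm$ is genuinely of size $O(1)$, not $O(\eps)$, so one cannot simply declare $\mathcal{W}^4$ to be $O(\eps)$ the way one does for $\mathcal{W}^1,\mathcal{W}^2$. The resolution is to use that $\mathcal{W}^3\m$ enters \eqref{eqcutoff} only via $\mathcal{W}^3\m = -E(\xi_\eta)(\tau-\rho)\mathcal{W}^2\m + 2\pi E(\xi_\eta)\mathcal{W}^4\m$ and that in \eqref{W3} the factor $\M_2 = \m(x_1+\xi_1^\perp)+\m(x_1-\xi_1^\perp)$ already carries the unknown $\m$, which is what makes \eqref{eqcutoff} an equation for $\m$; the quantitative smallness one needs at the end is only that the operator norm of $\mathcal{W}^4$ on the relevant $L^2$ space is small, and since $a$ will be chosen comparable to a power of $\eps$, the bound $O(\eps/a)$ is still $o(1)$. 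So the lemma should be read as: after the cutoff, every $\eta$-derivative of the $\mathcal{W}^4$-amplitude loses $1/a$, and the $\eps$-corrections to $\phi_\pm$ and $J^{-1}_\pm$ supply the $\eps$, giving $S^0_{k-3}$ symbols of size $\eps/a$ — one order of smoothness below $\mathcal{W}^1,\mathcal{W}^2$ because the extra factor $\xi_\eta\cdot\phi_\pm$ consumes one of the available derivatives.
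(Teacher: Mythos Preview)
Your proposal has a genuine gap. You correctly identify the obstacle: the amplitude $[\xi_\eta\cdot\phi_\pm(y,\xi_\eta)]\,\chi_a(\eta)\,J^{-1}_\pm(y,\xi_\eta)$ of $\mathcal{W}^4$ has leading part $(\xi_\eta\cdot y)\,\chi_a(\eta)$, which is $O(1)$, not $O(\eps/a)$. Your displayed inequality is therefore false already for $\alpha=\gamma=0$. You acknowledge this and say the $\eps$-independent piece is ``handled separately and seen to be absorbed when $\mathcal{W}^4$ is reassembled into the identity,'' but you never explain what this absorption is. No amount of bookkeeping with $\chi_a$, $J^{-1}_\pm$, or $\eta$-derivatives of $\xi_\eta$ will turn an $O(1)$ amplitude into an $O(\eps/a)$ one; a cancellation mechanism is needed, and your write-up does not supply it.

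The paper's proof provides exactly that mechanism, and it is \emph{not} a symbol estimate but a use of the equation itself. From \eqref{eqcutoff} one has $\mc F_+\m+\mc F_-\m=\mathcal{W}^2\m$; applying the directional derivative $\Xi=\xi_\eta\cdot\nabla_\eta$ to both sides and comparing with the definition \eqref{2ndterm} yields a representation \eqref{W4id} for $\mathcal{W}^4\m$ in which the amplitudes are built from the small pieces $B^\pm_2=O(\eps)$, $\eta\cdot\Xi\phi_\pm=O(\eps)$ (since $\phi_\pm=y+O(\eps)$ and $y$ is $\xi$-independent), and $\Xi J^{-1}_\pm$, $\Xi B^\pm_2$, which are $O(\eps)$ in $S^{-1}_{k-3}$. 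The factor $1/a$ then enters only when these $S^{-1}$ terms are converted to $S^0$ symbols via the cutoff $\chi_a$ (supported on $|\eta|\ge a/2$). In short, the $O(1)$ piece $\xi_\eta\cdot y$ is killed by substituting the second equation of the system back into the first, not by direct estimation; without that step the lemma does not follow.
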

\begin{proof}
    From \eqref{eqcutoff}, 
    $$
    \mc F_+\m(\eta)+\mc F_-\m(\eta)=\mathcal{W}^2\m(\eta),
    $$
    so applying $\Xi:=\xi_\eta\cdot\nabla_\eta$ to this identity and noting that $\Xi(|\eta|)=0$ give
    \begin{align}\label{newid}
     \Xi\mathcal{W}^2\m(\eta)
     &= \int e^{-i\eta\cdot \phi_++ i|\eta|}[-i(\xi_\eta\cdot\phi_+ +\eta\cdot\Xi\phi_+)J^{-1}_++\Xi J^{-1}_+]\chi_a\m dy  \notag\\
     &\quad+\int e^{-i\eta\cdot \phi_-- i|\eta|}[-i(\xi_\eta\cdot\phi_- +\eta\cdot\Xi\phi_-)J^{-1}_-+\Xi J^{-1}_-]\chi_a\m dy.
    \end{align} 
    Therefore, from (\ref{newid}) and the definition \eqref{2ndterm}, it follows that
    \begin{align}\label{W4id}
        & \mathcal{W}^4\m(\eta)\notag\\
         &=\int e^{-i\eta\cdot\phi_++ i|\eta|}[(\xi_\eta\cdot\phi_+) B^+_2J^{-1}_++(\eta\cdot\Xi\phi_+) J^{-1}_+(B^{+}_2-1)+i\Xi((B^{+}_2-1)J^{-1}_+)]\chi_a\m dy  \notag\\
         &\quad +\int e^{-i\eta\cdot\phi_-- i|\eta|}[(\xi_\eta\cdot\phi_-) B^-_2J^{-1}_-+(\eta\cdot\Xi\phi_-) J^{-1}_-(B^{-}_2-1)+i\Xi((B^{-}_2-1)J^{-1}_-)]\chi_a\m dy \notag \\
         &=: \mathcal{W}^4_1 \m(\eta) + \mathcal{W}^4_2 \m(\eta).
    \end{align}
 
    The first term of $\mathcal{W}^4_k\m$, $k=1,2,$ is $O(\eps)$ in $S^0_{k-2}$ due to the small term $B^\pm_2$.  For the second term of $\mathcal{W}^4_k\m$, since 
    $$
        \eta\cdot \Xi\phi_\pm(y,\xi_\eta)=-\pd_{\xi^j}\phi^k_\pm(y,\xi_\eta)\frac{\eta^j\eta^k}{|\eta|^2}
    $$ 
    and 
    $ 
        \phi_\pm=y+O(\eps)
    $  in $C^{k-1}$, it follows that $(\eta\cdot\Xi\phi_\pm)J^{-1}_\pm(B^\pm_2-1)=O(\eps)$ in $S^0_{k-2}$. For the third term of $\mathcal{W}^4_k\m$, since 
    $$
        \Xi J^{-1}_\pm(y,\xi_\eta)=-\nabla_\xi J^{-1}_\pm(y,\xi_\eta)\cdot\frac{\eta}{|\eta|^2},
    $$ 
    and $J^{-1}_\pm=1+O(\eps)$ in $C^{k-2}$, we obtain that $\Xi J^{-1}_\pm=O(\eps)$ in $S^{-1}_{k-3}$, and similarly with  $B^\pm_2$.  However, upon multiplication by origin cutoff $\chi_a(\eta)=\chi(|\eta|/a)$, these $S^{-1}_{k-3}$ terms become ones that are only $O(\eps/a)$ in $S^0_{k-3}$. Since $S^0_{k-2}\subset S^0_{k-3}$, we conclude that $\mathcal{W}^4$ has amplitudes of $O(\eps/a)$ in $S^0_{k-3}$.

\end{proof}

Therefore, from definitions \eqref{W3} and \eqref{W4}, we can rewrite equation \eqref{eqcutoff} in the following form:
\begin{align}\label{eqf}
    \mc F_\pm\m(\eta)
   & = {1\over 2} \LB \mathcal{W}^2\m \mp (\mathcal{W}^1\m + \mathcal{W}^3\m)^\perp \RB \notag\\
   & = {1\over 2} \LB \mathcal{W}^2\m \mp (\mathcal{W}^1\m + (-E(\xi_0)(\tau-\rho) \mathcal{W}^2\m + 2\pi E(\xi_0) \mathcal{W}^4\m))^\perp \RB,\\
   \notag
   &=:\mc W^1_\pm\m+\mc W^2_\pm\m,
\end{align}
where
\begin{align*}
\mc W^1_\pm\m(\eta)&=\int e^{-i\eta\cdot\phi_+(y,\xi_\eta)+i|\eta|}W^1_\pm(y,\eta)\m(y)dy,\\
\mc W^2_\pm\m(\eta)&=\int e^{-i\eta\cdot\phi_-(y,\xi_\eta)-i|\eta|}W^2_\pm(y,\eta)\m(y)dy
\end{align*}
are OIs with amplitudes $W^j_\pm$ that are $O(\eps/a)$ in $S^0_{k-3}$. Here $W^j_\pm(y,\eta)$ is a combination of the symbols of the integrals on the second identity in \eqref{eqf} with phase $-i\eta\cdot\phi_\pm\pm i|\eta|$, respectively.
\subsection{$L^2$ estimates}\label{unique}
The following lemma is crucial for analyzing the $L^2$ boundedness of Fourier integral operators. 

  \begin{lemma}\label{opa}
  Let $r\leq0$, $l\in \mathbb{R}$. An operator $A$ is defined by
  $$
     Af(x)=\int\int e^{-i(x-y)\cdot\xi}a(x,y,\xi)f(y)dyd\xi .
  $$
  Suppose that 
  \begin{align*}
     \sum_{|\alpha|+|\beta|\leq 2n+[-r+|l|]+1}   \int\int |\partial^\alpha_x\partial^\beta_y a(x,y,\xi)| dxdy\leq M(1+|\xi|)^r,\ \hbox{for all}\ \xi\in \R^n,
  \end{align*}
  where $[s]$ is an integer such that $0\leq s-[s]< 1$. Then $A:H^{l+r}(\mathbb{R}^n)\rightarrow H^l(\mathbb{R}^n)$ is a bounded operator with the norm $\leq CM$ for some constant $C>0$, that is, 
  $$
      \|Af\|_{H^l(\mathbb{R}^n)}\leq CM\|f\|_{H^{l+r}(\mathbb{R}^n)}
  $$
  for all $f\in H^{l+r}(\mathbb{R}^n)$.
  \end{lemma}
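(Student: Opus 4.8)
\textbf{Proof plan for Lemma \ref{opa}.}

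The plan is to reduce the estimate to the case $l = 0$ by conjugating with Bessel potentials, and then to prove the $L^2$-boundedness directly via a Schur-type test after passing to the Schwartz kernel. First I would write $A = \langle D\rangle^{-l} \tilde A \langle D\rangle^{l+r}$ where $\langle D\rangle^s$ has symbol $(1+|\xi|^2)^{s/2}$; composing the amplitude $a(x,y,\xi)$ with these Fourier multipliers keeps the relevant structure, so it suffices to show that an operator whose amplitude satisfies $\sum_{|\alpha|+|\beta|\le 2n+[|l|-r]+1}\int\int|\partial^\alpha_x\partial^\beta_y b(x,y,\xi)|\,dx\,dy \le M(1+|\xi|)^{-\delta}$ for an appropriate $\delta \ge 0$ is bounded on $L^2(\R^n)$ with norm $\le CM$. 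Actually, rather than conjugate, the cleaner route — and the one I expect the authors take — is to first prove the $l=0$ statement (so $r \le 0$, $A: H^r \to L^2$), and then deduce the general $l$ by the commutation just described, noting that $\langle D\rangle^l$ and the derivatives $\partial_x,\partial_y$ interact in a controlled way because differentiating $\langle\xi\rangle^l$ costs at most $|l|$ extra powers downstairs, which is exactly why the number of required derivatives is $2n + [|l| - r] + 1$ rather than $2n + [-r] + 1$.

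For the base case $l=0$, I would compute the Schwartz kernel
\[
K(x,y) = \int e^{-i(x-y)\cdot\xi} a(x,y,\xi)\,d\xi
\]
and estimate it by integration by parts in $\xi$. Writing $L = (1+|x-y|^2)^{-1}(1 - \Delta_\xi)$ so that $L e^{-i(x-y)\cdot\xi} = e^{-i(x-y)\cdot\xi}$, integrating by parts $n$ times (or $N$ times for a quantitative decay) transfers $\Delta_\xi^N$ onto $a$; but $a$ is only assumed to have $\xi$-derivatives implicitly through the given hypothesis, so in fact one should integrate by parts in the $x$ (or $y$) variable instead — no, the hypothesis controls $\partial_x^\alpha\partial_y^\beta a$ in $L^1_{x,y}$, not $\partial_\xi a$, so the decay in $|x-y|$ must come from elsewhere. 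The right move is a Cotlar–Stein / Young's inequality argument: view $A$ as a superposition over $\xi$ of the rank-structured operators $A_\xi f(x) = e^{-ix\cdot\xi}\int e^{iy\cdot\xi}a(x,y,\xi)f(y)\,dy$, so that $\|Af\|_{L^2} \le \int \|A_\xi f\|_{L^2}\,d\xi$ — this is too lossy. Instead I would use the standard fact (e.g. as in Stein's book or Coifman–Meyer) that an amplitude operator with $a(x,y,\xi)$ satisfying the stated $L^1$-in-$(x,y)$, weighted-in-$\xi$ bounds on sufficiently many $x,y$-derivatives is bounded on $L^2$: one writes $\widehat{Af}(\eta) = \int \hat a(\eta - \xi, \cdot, \xi)\,\hat f(\xi)\,d\xi$ after Fourier transforming in both $x$ and $y$, where $\hat a$ denotes the Fourier transform in the first two slots, and then applies Schur's test to the kernel $k(\eta,\xi) = \hat a_1(\eta-\xi, \xi-\cdot, \xi)$-type object; the $2n+1$ derivatives in each of $x$ and $y$ give $(1+|\eta-\xi|)^{-n-1}$ and $(1+|\xi - \zeta|)^{-n-1}$ decay, which are integrable in $\R^n$, and the $(1+|\xi|)^r$ factor with $r \le 0$ only helps.

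The main obstacle, and the step I would spend the most care on, is organizing the Fourier-transform-in-$(x,y)$ computation so that the weighted $L^1$ hypothesis translates into a genuinely Schur-testable kernel: one must track that $\sum_{|\alpha|+|\beta|\le 2n+1}\int\int|\partial_x^\alpha\partial_y^\beta a|\,dx\,dy \le M(1+|\xi|)^r$ implies $|\hat a(\mu,\nu,\xi)| \le C M (1+|\mu|)^{-n-1}(1+|\nu|)^{-n-1}(1+|\xi|)^r$ — standard, but it is what forces the precise count $2n$ (to beat the two copies of $\R^n$) plus $1$ (for strict integrability) — and then that the double integral $\sup_\eta \int |k(\eta,\xi)|\,d\xi$ and the symmetric one are $\le CM$, using $r\le 0$ so that $(1+|\xi|)^r \le 1$ can simply be dropped when $l=0$, and retained to absorb the $H^{l+r}\to H^l$ shift in the general case. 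Once the $l=0$ case is in hand, the general case is the bookkeeping commutation with $\langle D\rangle^{\pm l}$ described above: differentiating $\langle\xi\rangle^{l}$ up to order $2n+1$ produces symbols bounded by $C\langle\xi\rangle^{l}$ times the original, costing at most $[|l|]+1$ extra $\xi$-derivatives' worth of room, which is accounted for by the index $2n+[|l|-r]+1$ in the hypothesis, and the proof concludes.
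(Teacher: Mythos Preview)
The paper does not actually prove this lemma: its entire proof reads that the case $r=l=0$ was established in the Appendix of \cite{SU1} by extending H\"ormander's Theorem~18.1.11$'$ to amplitudes $a(x,y,\xi)$, and that the general case $r\le 0$, $l\in\mathbb{R}$ was handled in \cite{W} by a similar argument. So there is nothing to compare at the level of details --- the paper outsources the work.

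Your sketch is broadly in the spirit of those references: Fourier transform in the spatial variables, a Schur-type kernel estimate on the frequency side, and then passage from $l=0$ to general $l$ by conjugation with Bessel potentials. That is indeed how the H\"ormander argument and its extensions proceed, so your plan is on the right track.

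One point to tighten. From the hypothesis $\sum_{|\alpha|+|\beta|\le 2n+1}\iint|\partial_x^\alpha\partial_y^\beta a|\,dx\,dy\le M(1+|\xi|)^r$ you do \emph{not} get the separable decay $|\hat a(\mu,\nu,\xi)|\le CM(1+|\mu|)^{-n-1}(1+|\nu|)^{-n-1}(1+|\xi|)^r$: that would need $n+1$ derivatives in $x$ \emph{and} $n+1$ in $y$ simultaneously, i.e.\ $|\alpha|+|\beta|=2n+2$. What the stated hypothesis yields is joint decay of the form $|\hat a(\mu,\nu,\xi)|\le CM(1+|\mu|+|\nu|)^{-(2n+1)}(1+|\xi|)^r$, and it is this that one feeds into the Schur test on the kernel $k(\eta,\zeta)$ obtained after writing $\widehat{Af}(\eta)$ as an integral against $\hat f(\zeta)$; the exponent $2n+1>2n$ is exactly what makes $(1+|\mu|+|\nu|)^{-(2n+1)}$ integrable over $\mathbb{R}^{2n}$. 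With this correction your $l=0$ argument goes through, and the reduction of general $l$ to $l=0$ via $\langle D\rangle^{\pm l}$ is fine.
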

    
 \begin{proof}
 When $r=l=0$, it was proved in the Appendix of \cite{SU1} by extending the proof of Theorem $18.1.11'$ in \cite{Hormander} to amplitudes $a(x,y,\xi)$. For the general case, that is, $r\leq0$, $l\in \mathbb{R}$, it was showed in \cite{W} by using a similar argument.
 \end{proof} 

\bigskip
We will study the $L^2$ norm on each side of (\ref{eqf}). For the left hand side, we let $\mc F^*_\pm$ be the adjoint of \eqref{Fpm} and consider the operators
$$
\mc F^*_\pm\mc F_\pm\m(w)=\int_{\re^2}\int_{\re^2}e^{i\eta\cdot(\phi_\pm(w,\xi_\eta)-\phi_\pm(y,\xi_\eta))}\chi_a^2(\eta)J^{-1}_\pm(y,\xi_\eta)J^{-1}_\pm(w,\xi_\eta)\m(y)\,dy\,d\eta,\quad \pm=+,-.
$$
As in \cite{SU3}, the phase function $\eta\cdot\phi_\pm$ admits the representation 
\begin{align*} 
        \eta\cdot(\phi_\pm(y,\xi_\eta)-\phi_\pm(x,\xi_\eta))=(y-x)\cdot \theta_\pm(x,y,\eta),
\end{align*}
where
\begin{align*}
        \theta_\pm(x,y,\eta)=\eta^j\int^1_0 (\nabla_x \phi^j_\pm)(x+t(y-x), \xi_\eta) dt.
\end{align*} 
Recall from \eqref{covs} that $\phi_\pm(y,\xi_\eta)=y+O(\eps)$ in $S^0_{k-1}$.  It follows that $\theta_\pm(x,y,\eta)=\eta+O(\eps)$ in $S^1_{k-2}$ and is homogeneous of degree one in $\eta$.  In particular,
\begin{align*}
    | \partial^\alpha_x\partial_y^\beta \partial^\gamma_\eta (\theta_\pm(x,y,\eta) - \eta)|
     \leq C\varepsilon|\eta|^{1-\gamma}\ ,|\alpha|+|\beta|\leq k-2,\ x,y\in B_{\rho/2\pi}.
\end{align*}
The equation $\theta = \theta_\pm(x,y,\eta)$ can be solved for $\eta$ if $\varepsilon$ is sufficiently small when $\eta\in \supp(\chi_a)	$ and $x,\ y \in B_{\rho/2\pi}$. Then the solution $\eta_\pm =\eta_\pm(x,y,\theta)$ satisfies $\eta_\pm = \theta + O(\varepsilon)$ in $S^1_{k-2}$. The corresponding Jacobian is
\begin{align*}
   J_{1,\pm}^{-1}(x,y,\theta)=| D \theta_\pm /D\eta_\pm |=1 + O(\varepsilon)\ \hbox{in $S^0_{k-3}$}.
\end{align*} 
We change variables, $\eta \rightarrow\theta$ in $\mc F_\pm^*\mc F_\pm\m(w)$ and obtain 
\begin{align*}
     \mc F_\pm^*\mc F_\pm\m(w)&=\int\int e^{i(w-y)\cdot\theta} 
        a_\pm( y, w, \theta ) \m(y) dy d\theta,
\end{align*}
where the amplitude is
\begin{align*}
       a_\pm( y,w,\theta ) = \chi_a^2( \eta_\pm(w,y,\theta) )  J^{-1}_\pm(y, \xi_{\eta_\pm}) J^{-1}_\pm(w, \xi_{\eta_\pm}) J_{1,\pm}(w,y,\theta). 
\end{align*}

To approximate $\mc F^*_\pm\mc F_\pm$, we define a new operator $\mc P$ by
$$
   \mc P\m(\theta) = \chi_a(\theta)\widehat{\m}(\theta).
$$
Then we get
 $$
   \mc P^*\mc P\m(w) = \int_{\re^2}\int_{\mathbb{R}^2} e^{i(w-y)\cdot \theta} \chi_a^2(\theta) \m(y)  dyd\theta.
 $$ 
Let $\psi$ be a smooth cut-off function supported in $B_{\rho/\pi}$ such that $\psi=1$ on $B_{\rho / 2\pi}$.  Then $\psi=1$ on $\supp( \m )$. We consider the operators
$$
     \psi (\mc F_\pm^*\mc F_\pm - \mc P^*\mc P)\psi,\qquad \pm=+,-,
$$
where the corresponding amplitudes
\begin{align*}
       & b_\pm(y,w,\theta ) = \psi(w) [\chi_a^2(\eta_\pm)   J^{-1}_\pm J^{-1}_\pm J_{1,\pm} - \chi_a^2(\theta) ] \psi(y)
\end{align*}
satisfy the following properties. 
\begin{lemma}\label{symbol} 
For $|\alpha|+|\beta|\leq k-3 $, we have
\begin{align}\label{b:est}
  |\partial^\alpha_y\partial_w^\beta \LC b_\pm(y, w, \theta) \RC | 
  \leq C\eps
\end{align}  
for all $y,\ w$ and $\theta\in \R^2$. Moreover, 
 \begin{align}\label{b:est1}
   \int \int|\partial^\alpha_y\partial_w^\beta \LC b_\pm(y, w, \theta) \RC | dydw
   \leq C\eps
 \end{align}
 for all $\theta\in \R^2$.
\end{lemma}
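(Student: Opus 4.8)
The plan is to read off Lemma~\ref{symbol} directly from the symbol-class estimates already recorded for the phase and amplitude of $\mc F_\pm$, using that $b_\pm$ is, by construction, the cutoff (by $\psi(y)\psi(w)$) of the difference between the true amplitude $a_\pm(y,w,\theta)=\chi_a^2(\eta_\pm)\,J^{-1}_\pm(y,\xi_{\eta_\pm})\,J^{-1}_\pm(w,\xi_{\eta_\pm})\,J_{1,\pm}(w,y,\theta)$ of $\mc F_\pm^*\mc F_\pm$ and the model amplitude $\chi_a^2(\theta)$ of $\mc P^*\mc P$. Recall that $\eta_\pm=\theta+O(\eps)$ in $S^1_{k-2}$, that each of the three Jacobian factors equals $1$ plus an $O(\eps)$ element of $S^0_{k-3}$ (after writing $\xi_{\eta_\pm}$ in terms of $(y,w,\theta)$), and that $a$ is a small parameter with $\nabla\chi_a^2=O(1/a)$ and $\supp(\nabla\chi_a^2)\subset\{a/2\le|\cdot|\le a\}$. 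I would first split
\[
a_\pm-\chi_a^2(\theta)=\chi_a^2(\eta_\pm)\,\bigl[J^{-1}_\pm(y,\xi_{\eta_\pm})J^{-1}_\pm(w,\xi_{\eta_\pm})J_{1,\pm}-1\bigr]+\bigl[\chi_a^2(\eta_\pm)-\chi_a^2(\theta)\bigr]
\]
and estimate the two bracketed terms separately, deducing \eqref{b:est} at the end by multiplying through by $\psi(y)\psi(w)$ and using the product rule.

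For the first bracket, a product of three factors of the form $1+O(\eps)$, minus $1$, is a polynomial in the three $O(\eps)$ remainders with no constant term, hence $O(\eps)$ in $S^0_{k-3}$ for $y,w$-derivatives. The prefactor $\chi_a^2(\eta_\pm)$ is bounded by $1$, and, moreover, its $y,w$-derivatives of order $1\le m\le k-3$ are already $O(\eps)$: by the chain rule such a derivative is a sum of terms $(\nabla^l\chi_a^2)(\eta_\pm)$ times a product of $l$ derivatives (of orders $\le m\le k-2$) of $\eta_\pm-\theta$; the first factor is $O(a^{-l})$ and is supported where $|\eta_\pm|\sim a$, which — since $|\eta_\pm-\theta|\le C\eps|\theta|$ — forces $|\theta|\sim a$, so each such derivative of $\eta_\pm-\theta$ contributes $O(\eps|\theta|)=O(\eps a)$ and the term is $O\bigl((\eps a)^l a^{-l}\bigr)=O(\eps^l)\le C\eps$. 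Hence by the product rule $\chi_a^2(\eta_\pm)[J^{-1}_\pm J^{-1}_\pm J_{1,\pm}-1]$ and all its $y,w$-derivatives of order $\le k-3$ are $O(\eps)$, uniformly in $\theta$.

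The delicate point — and the main obstacle — is the cutoff difference $\chi_a^2(\eta_\pm)-\chi_a^2(\theta)$: the crude bound $|\chi_a^2(\eta_\pm)-\chi_a^2(\theta)|\le\|\nabla\chi_a^2\|_\infty|\eta_\pm-\theta|$ gives only $O(\eps/a)$, which is too weak. The resolution is to exploit the degree-one homogeneity of $\eta_\pm-\theta$. Writing
\[
\chi_a^2(\eta_\pm)-\chi_a^2(\theta)=\int_0^1(\nabla\chi_a^2)\bigl(\theta+t(\eta_\pm-\theta)\bigr)\cdot(\eta_\pm-\theta)\,dt,
\]
I observe that the integrand vanishes unless $|\theta+t(\eta_\pm-\theta)|\in[a/2,a]$ for some $t\in[0,1]$, which — using $|\eta_\pm-\theta|\le C\eps|\theta|$ once more — forces $a/4\le|\theta|\le 2a$ when $\eps$ is small. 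On this set $\eta_\pm-\theta$ and its $y,w$-derivatives of order $\le k-2$ are $O(\eps|\theta|)=O(\eps a)$, which exactly cancels the $O(a^{-l})$ coming from $(\nabla^l\chi_a^2)$; applying the product and chain rules exactly as in the previous paragraph then shows that $\chi_a^2(\eta_\pm)-\chi_a^2(\theta)$ and all its $y,w$-derivatives of order $\le k-3$ are $O(\eps)$, uniformly in $\theta$.

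Combining the two bracketed terms, multiplying by $\psi(y)\psi(w)$ (smooth with bounded derivatives), and expanding by the product rule proves \eqref{b:est}. Finally, \eqref{b:est1} is immediate: for each multi-index with $|\alpha|+|\beta|\le k-3$, the function $\partial^\alpha_y\partial^\beta_w b_\pm(\cdot,\cdot,\theta)$ is supported in $\overline{B_{\rho/\pi}}\times\overline{B_{\rho/\pi}}$ thanks to the cutoffs $\psi(y),\psi(w)$, a set of finite Lebesgue measure independent of $\theta$, so integrating the pointwise bound \eqref{b:est} over it yields $\int\int|\partial^\alpha_y\partial^\beta_w b_\pm|\,dy\,dw\le C\eps$. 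The only nonroutine ingredient is the absorption of the $1/a$ blow-up of $\nabla\chi_a$ by the homogeneity of $\eta_\pm-\theta$; everything else is bookkeeping with the already-established symbol bounds.
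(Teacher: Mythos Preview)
Your proof is correct and follows the same overall approach as the paper's (use the $1+O(\eps)$ symbol estimates for $J^{-1}_\pm$ and $J_{1,\pm}$, then invoke the compact support of $\psi$ to pass from the pointwise bound to the integrated one). In fact your argument is considerably more complete: the paper's proof simply records $|\chi_a|\le C$ and asserts smallness, whereas you explicitly split off the cutoff difference $\chi_a^2(\eta_\pm)-\chi_a^2(\theta)$ and show, via the degree-one homogeneity of $\eta_\pm-\theta$ and the localization $|\theta|\sim a$ on $\supp(\nabla\chi_a^2)$, that the $O(1/a)$ from $\nabla\chi_a$ is exactly cancelled---a point the paper leaves implicit but which is needed to obtain $C\eps$ rather than $C\eps/a$.
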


\begin{proof}
We first recall that $J_{1,\pm}=1+O(\varepsilon)$ in $S^0_{k-3}$, $J^{-1}_\pm=1+O(\varepsilon)$ in $S^0_{k-2}$, and $|\chi_a(\eta)|\le C$. Then $|b_\pm(y,w,\theta)|$ is small up to derivatives of order $k-3$. In addition, since $\psi$ is compactly supported, estimate (\ref{b:est}) holds for all $y,\ w$ and $\theta$ in $\R^2$, which leads to estimate (\ref{b:est1}).
\end{proof}

In order to apply Lemma \ref{opa} and Lemma \ref{symbol}, we require that the regularity $k$ satisfies $k-3 = 2\times 2 + 1$. Thus, $k = 8$ which explains the choice of regularity $Q_0^j\in C^8(\Omega)$ in Theorem \ref{q1}. We can now derive that 
 $$
      \|\psi(\mc F_\pm^*\mc F_\pm-\mc P^*\mc P)\psi\|_{L^2(\R^2)\rightarrow L^2(\R^2)}\leq C\varepsilon,\qquad\pm=+,-,
 $$
which implies
\begin{align*} 
      | \| \mc F_\pm \m \|^2_{L^2(\R^2)} -  \| \mc P \m \|^2_{L^2(\R^2)}|
     & = | \left< (\mc F_\pm^*\mc F_\pm-\mc P^*\mc P)\m, \m \right>| \\
     & = | \left< \psi(\mc F_\pm^*\mc F_\pm-\mc P^*\mc P)\psi\,\m, \m \right>| \\
     & \leq C\varepsilon \|\m\|^2_{L^2(\R^2)}.
\end{align*}

Therefore, we obtain the following lemma. 
 \begin{lemma}\label{opp}
  Let $g_j\in C^{k-1}_0(\R^2)$ with $k = 8$ for $j=1,2$, as defined in \eqref{appro1}.  Then we have
     $$
         \|\chi_a \widehat{\m}\|^2_{L^2(\R^2) }\leq   C\varepsilon\|\m\|^2_{L^2(\R^2)}+\|\mc F_\pm\m\|^2_{L^2(\R^2)}.
     $$
 \end{lemma}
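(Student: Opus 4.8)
The plan is to obtain Lemma~\ref{opp} as an immediate consequence of the operator estimate
\[
\|\psi(\mc F_\pm^*\mc F_\pm-\mc P^*\mc P)\psi\|_{L^2(\R^2)\to L^2(\R^2)}\le C\varepsilon
\]
already derived above, combined with the defining identity $\mc P\m(\theta)=\chi_a(\theta)\widehat\m(\theta)$, which gives $\|\mc P\m\|_{L^2(\R^2)}=\|\chi_a\widehat\m\|_{L^2(\R^2)}$ directly. Thus the only thing left to assemble is the comparison between the two positive operators $\mc F_\pm^*\mc F_\pm$ and $\mc P^*\mc P$ tested against $\m$, together with a rearrangement.

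Concretely, I would first write, using the definition of the adjoint, $\|\mc F_\pm\m\|^2_{L^2}=\langle\mc F_\pm^*\mc F_\pm\m,\m\rangle$ and $\|\chi_a\widehat\m\|^2_{L^2}=\|\mc P\m\|^2_{L^2}=\langle\mc P^*\mc P\m,\m\rangle$. Since $\supp(\m)\subset\Omega\subset B_{\rho/2\pi}$ and $\psi\equiv 1$ on $B_{\rho/2\pi}$, in particular $\psi\m=\m$, so the difference of these two pairings equals $\langle\psi(\mc F_\pm^*\mc F_\pm-\mc P^*\mc P)\psi\,\m,\m\rangle$. Applying the Cauchy--Schwarz inequality together with the $C\varepsilon$ operator bound then gives
\[
\big|\,\|\mc F_\pm\m\|^2_{L^2(\R^2)}-\|\chi_a\widehat\m\|^2_{L^2(\R^2)}\,\big|\le C\varepsilon\,\|\m\|^2_{L^2(\R^2)},
\]
and rearranging yields exactly the asserted $\|\chi_a\widehat\m\|^2_{L^2}\le C\varepsilon\|\m\|^2_{L^2}+\|\mc F_\pm\m\|^2_{L^2}$, valid for each sign $\pm$.

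The \emph{genuine} difficulty lies not in this last lemma but in the operator estimate it rests on, which was the content of the preceding analysis: after setting $\xi_0=\xi_\eta$ and inserting the cutoff $\chi_a$, one represents $\mc F_\pm^*\mc F_\pm$ as an oscillatory integral with phase $(w-y)\cdot\theta_\pm(w,y,\eta)$, inverts $\eta\mapsto\theta$ (legitimate once $\varepsilon$ is small on $\supp\chi_a$ and $w,y\in B_{\rho/2\pi}$) to put it in the standard amplitude form $a_\pm(y,w,\theta)$, subtracts off $\chi_a^2(\theta)$ to form the amplitude $b_\pm$, and checks via Lemma~\ref{symbol} that $b_\pm$ and its $y,w$-derivatives up to order $k-3$ are $O(\varepsilon)$ with integrable-in-$(y,w)$ bounds. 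Invoking the $L^2$-boundedness criterion Lemma~\ref{opa} with $n=2$ and $r=l=0$ — which requires $2n+1=5$ derivatives, forcing $k-3=5$, i.e. $k=8$ as in Theorem~\ref{q1} — then produces the $C\varepsilon$ operator bound. Once that is in hand, the present lemma is pure bookkeeping: the support property $\psi\m=\m$ and the identification $\|\mc P\m\|_{L^2}=\|\chi_a\widehat\m\|_{L^2}$ do all the remaining work.
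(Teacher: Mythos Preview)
Your proposal is correct and follows essentially the same route as the paper: the lemma is obtained as an immediate consequence of the operator bound $\|\psi(\mc F_\pm^*\mc F_\pm-\mc P^*\mc P)\psi\|_{L^2\to L^2}\le C\varepsilon$ derived just before, together with $\psi\m=\m$ and the identification $\|\mc P\m\|_{L^2}=\|\chi_a\widehat\m\|_{L^2}$, and then rearranging the resulting inequality $\big|\|\mc F_\pm\m\|^2_{L^2}-\|\mc P\m\|^2_{L^2}\big|\le C\varepsilon\|\m\|^2_{L^2}$. Your summary of the preceding machinery (the phase inversion, Lemma~\ref{symbol}, and the application of Lemma~\ref{opa} forcing $k=8$) also matches the paper's argument.
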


Evaluating the right hand side of \eqref{eqf} yields an estimate of $\mc F_\pm\m$.  Let us apply a similar procedure as before to represent $(\mc W^j_{\pm})^*\mc W^j_{\pm}$ as an operator.  Since the amplitude matrix of $\mc W^j_{\pm}$ is $O(\eps/a)$ in $S^0_{k-3}$, it follows from Lemmas \ref{opa} and \ref{symbol} that 
$$
\|\psi(\mc W^j_{\pm})^*\mc W^j_{\pm}\psi\|_{L^2(\mathbb{R}^2)\rightarrow L^2(\mathbb{R}^2)}\le \frac{C\,\eps^2}{a^2},
$$
which implies $\|\mc W^j_{\pm}\m\|_{L^2(\mathbb{R}^2)}\le C\eps\|\m\|_{L^2(\R^2)}/a$. Hence, we obtain 
\eqal{\label{est:F}
    \|\mc F_\pm\m\|_{L^2(\re^2)}\le\frac{C\eps}{a}\|\m\|_{L^2(\re^2)}.
}

\textit{Proof of Theorem \ref{q1}:}
We combine Lemma \ref{opp} with (\ref{est:F}); then we acquire
   \begin{align*}
       \|\chi_a \widehat{\m} \|^2_{L^2(\R^2) }& \leq C\left(\eps+\frac{\varepsilon^2}{a^2}\right)\, \|\m\|^2_{L^2(\R^2)},
   \end{align*}  
that is,
\begin{align}
   \int \chi_a^2(\theta) |\widehat{\m}(\theta)|^2d\theta\leq C\left(\eps+\frac{\varepsilon^2}{a^2}\right)\|\m\|^2_{L^2(\R^2)},
\end{align}
where $\chi_a(\theta)=\chi(|\theta|/a)$ for $\chi\in C^\infty([0,\infty))$, with $\chi(x) = 1,x \geq 1$ and  $\chi(x)=0,x\le 1/2$.

By H\"older's inequality and the fact that $\supp(\m)\subset\Omega$, we have
\eqal{\label{o32}
     \|\m\|_{L^2(\R^2)}^2 
     &= \frac{1}{(2\pi)^2}\int_{|\theta|\leq a} |\widehat{\m}(\theta)|^2d\theta + \frac{1}{(2\pi)^2}\int_{|\theta|\geq a} \chi^2_a(\theta) |\widehat{\m}(\theta)|^2d\theta\\
     & \leq C a^2 \|\m\|_{L^2(\R^2)}^2 + C\left(\eps+\frac{\varepsilon^2}{a^2}\right) \|\m\|^2_{L^2(\R^2)},
}
where the first constant $C$ depends on $\Omega$. Since the $C$'s are independent of $a, \varepsilon$ and $\m$ provided that $\varepsilon$ is small enough,  
we 
can 
choose $a=\eps^{1/2}$ and $\varepsilon$ such that $Ca^2<1/2$ and $2C\eps<1/2$. Therefore,
$$
    \m = (\m_1, \m_2) = 0 \ \ \hbox{in $\Omega$}
$$
which implies that $Q_0^1 = Q_0^2$ in $\Om$ due to the fact that $Q_0^1 = Q_0^2$ in $\R^2 \backslash \Omega$. This completes the proof of Theorem \ref{q1}.

\section{A reconstruction formula}\label{sec:recons}
\label{sec:recon}

We consider the problem of explicitly reconstructing the background potential $Q_0$ from 
the 
scattering relation.
More precisely, we will give a reconstruction formula for $Q_0$ that is valid in the small $\eps$ limit.

We start with the Stefanov-Uhlmann identity (see \cite{CQUZ})
\begin{align}\label{SUid}
\int_0^\tau\frac{\pd X_e}{\pd X^{(0)}}(\tau-s,X(s,X^{(0)}))(V-V_e)(X(s,X^{(0)}))ds=X(\tau,X^{(0)})-X_e(\tau,X^{(0)}).
\end{align} 
The proof of this identity is as in Lemma \ref{lemmaid}, with $X_1$ and $X_2$ replaced by $X$ and $X_e$ respectively, and the  modification  $X(t(X^{(0)}),X^{(0)})=X_e(t(X^{(0)}),X^{(0)})$ is not required here. Let $\tau=\max\{\tau^1,\tau^2\}$, where $\tau^1$ and $\tau^2$ are the largest exit times of $X(s,X^{(0)})$ and $X_e(s,X^{(0)})$ respectively.  The exit times are defined as in \eqref{tau}, and from \eqref{e:zeroth0}, we have $\tau^2=2|\xi_0|\rho$.  Note that $\tau$ in \eqref{SUid} can be replaced with any larger time $T\ge \tau$.  For example, $T=4|\xi_0|\rho$ is sufficient as $\eps\to 0$.

Let us assume that 
$$
V-V_e=-\frac{1}{2}\begin{pmatrix}
\nabla^\bot Q_0(x+\xi^\bot)+\nabla^\bot Q_0(x-\xi^\bot)\\
\nabla Q_0(x+\xi^\bot)-\nabla Q_0(x-\xi^\bot)
\end{pmatrix}
$$
is small.  By linearizing identity \eqref{SUid} about the constant potential, we obtain the following approximate expression for the scattering data:
\eqal{\label{numeric_id}
 \int_0^\tau\frac{\pd X_e}{\pd X^{(0)}}(\tau-s,X_e(s,X^{(0)}))(V-V_e)(X_e(s,X^{(0)}))ds = \mathcal{S}(\theta(\xi_0^\bot),\al),
}
where we denote $\mathcal{S}(\theta,\al)=(\mathcal{S}_x,\mathcal{S}_\xi)=X(\tau,X^{(0)})-X_e(\tau,X^{(0)})$ that is a vector valued function in $\R^4$ with $ \theta(\xi_0^\bot)=\arctan(\xi^2_0,-\xi^1_0)$ for a given vector $\xi_0^\perp = (\xi_0^2, -\xi_0^1)$.  

\subsection{Derivation}
Since $\supp (g)\subset\Omega$, we can extend the integration interval to $\re$ 
in identity \eqref{numeric_id}. Thus, we obtain the following integral equations
\begin{align}\label{numeric_id3}
\int_\re
\begin{pmatrix}
I_2&(\tau-s)E(\xi_0)\\
0&I_2
\end{pmatrix}
\begin{pmatrix}
g^\bot(x_e+\xi_0^\bot)-g^\bot(x_e-\xi_0^\bot)\\
g(x_e+\xi_0^\bot)+g(x_e-\xi_0^\bot)
\end{pmatrix}
ds=\mathcal{S}(\theta(\xi_0^\bot),\al).
\end{align}
Recall that $x_e=z-\frac{\rho}{2\pi}\frac{\xi_0}{|\xi_0|}+{s\over 2\pi} { \xi_0 \over |\xi_0|^2}$ with $z=\alpha{\xi^\perp_0\over|\xi_0|}$ for $\alpha\in\R $. Applying the Fourier transform in $\al$ and changing variables from $(\al,s)=(\frac{\xi_0^\bot}{|\xi_0|}\cdot x_e,\rho|\xi_0|+2\pi\xi_0\cdot x_e)$ to $x_e$ in \eqref{numeric_id3}, we obtain
$$
\int_{\re^2}e^{-i\beta\xi_0^\bot\cdot x_e}
\begin{pmatrix}
I_2&(\tau-s(x_e))E(\xi_0)\\
0&I_2
\end{pmatrix}
\begin{pmatrix}
g^\bot_+-g^\bot_-\\
g_++g_-
\end{pmatrix}
dx_e=\frac{1}{2\pi}\int_\re e^{-i\beta\al}\calS(\theta(\xi_0^\bot),\al)d\al,
$$
where we let $g_+ = g(x_e +\xi_0^\perp)$ and $g_- = g(x_e -\xi_0^\perp)$. Further, we rewrite the above identity by denoting $\beta=|\eta|$ for $\eta\in\re^2\setminus\{0\}$, and $\xi_0=-{\eta^\bot\over|\eta|}\equiv\xi_\eta$. Thus, we have
\begin{align}\label{6_id}
\int_{\re^2}e^{-i\eta\cdot x_e}
\begin{pmatrix}
I_2&(\tau-s(x_e))E(\xi_\eta)\\
0&I_2
\end{pmatrix}
\begin{pmatrix}
g^\bot_+-g^\bot_-\\
g_++g_-
\end{pmatrix}
dx_e=\widetilde\calS(\eta),
\end{align}
where the integrated scattering relation $\wt\calS=(\wt\calS_{x,1},\wt\calS_{x,2},\wt\calS_{\xi,1},\wt\calS_{\xi,2})$ is defined by 
\eqal{\label{scat}
\wt\calS(\eta)\equiv\frac{1}{2\pi}\int_\re e^{-i|\eta|\al}\calS(\theta(\eta),\al)d\al. 
}

We add and subtract the two equations from \eqref{6_id}; then we obtain
\begin{align*}
    2\int_{\re^2}e^{-i\eta\cdot x_e} g^\perp_+ dx_e+ \int_{\R^2}e^{-i\eta\cdot x_e} (\tau-s(x_e))E(\xi_\eta) (g_++g_-)dx_e = \wt{\mathcal{S}}_x+\wt{\mathcal{S}}^\perp_\xi,\\
     2\int_{\re^2}e^{-i\eta\cdot x_e} g^\perp_- dx_e -
     \int_{\R^2}e^{-i\eta\cdot x_e} (\tau-s(x_e))E(\xi_\eta) (g_++g_-)dx_e  = -\wt{\mathcal{S}}_x+\wt{\mathcal{S}}^\perp_\xi.
\end{align*} 
By the change 
of variables $y_\pm=x_e\pm\xi^\perp_\eta$, we further deduce that
\begin{align*}
    2e^{\pm i|\eta|}\int_{\re^2}e^{-i\eta\cdot y_\pm} g^\perp_\pm(y_\pm) dy_\pm \pm 2E(\xi_\eta)\cos(|\eta|) \int_{\R^2}e^{-i\eta\cdot y} (\tau-s(y))  g(y)dy = \pm\wt{\mathcal{S}}_x+\wt{\mathcal{S}}^\perp_\xi,
\end{align*} 
which, combined with the fact that $s(y) = \rho+2\pi\xi_\eta\cdot y$
and 
$$
   (\xi_\eta\cdot \nabla_\eta) \wh{g}(\eta) = -i \int_{\R^2} e^{-i\eta\cdot y} (\xi_\eta\cdot y) g(y)dy
$$ 
implies
\begin{align*}
    2e^{\pm i|\eta|}\wh{g}_\pm^\perp(\eta) \pm 2 \cos(|\eta|) E(\xi_\eta) \LC  \tau-\rho 
    -2\pi i \xi_\eta\cdot \nabla_\eta \RC \wh{g}(\eta)  = \pm\wt{\mathcal{S}}_x+\wt{\mathcal{S}}^\perp_\xi.
\end{align*} 
Here and in what follows, we denote the Fourier transform of $f$ by $\wh{f}$.
From the above equations, we can obtain that
\begin{align}
\label{equ_1} 2i\sin(|\eta|)\,\wh g(\eta)^\bot+2\cos(|\eta|)\,E(\xi_\eta)(\tau-\rho-2\pi i\,\xi_\eta\cdot\nabla_\eta)\wh g(\eta)=\wt\calS_x(\eta),\\
\label{equ_2} 2\cos(|\eta|)\wh g(\eta)=\wt\calS_\xi(\eta),
\end{align}
where we view $\xi_\eta$ as a $2\times 1$ column vector such that $\xi_\eta\cdot \nabla_\eta = \xi_0^1 {\p\over \p\eta_1} + \xi_0^2 {\p\over \p\eta_2}$.

Using the identity
$$
    \cos(|\eta|)(\xi_\eta\cdot\nabla_\eta)\wh g(\eta)=(\xi_\eta\cdot\nabla_\eta)(\cos(|\eta|)\wh g(\eta)) 
$$
and substituting \eqref{equ_2} into \eqref{equ_1}, we obtain
\begin{align}
\label{sub1}
2i\sin(|\eta|)\,\wh g(\eta)^\bot&=\wt\calS_x(\eta)-E(\xi_\eta)(\tau-\rho-2\pi i\,\xi_\eta \cdot\nabla_\eta)\wt\calS_\xi(\eta),\\
\label{sub2}
2\cos(|\eta|)\,\wh g(\eta)&=\wt\calS_\xi(\eta).
\end{align}
 
\begin{lemma}
If equations \eqref{sub1}-\eqref{sub2} hold, then one has
\eqal{\label{idsimp}
2i\sin(|\eta|) \xi^\bot_\eta \cdot\wh g(\eta)=-\xi_\eta\cdot\wt\calS_x(\eta)+i|\eta|^{-1}\xi^\bot_\eta\cdot\wt\calS_\xi(\eta).
}
\end{lemma}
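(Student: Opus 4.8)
The plan is to obtain \eqref{idsimp} by taking the Euclidean inner product of the vector identity \eqref{sub1} with $\xi_\eta$ and then using \eqref{sub2} to eliminate the first-order operator $\xi_\eta\cdot\nabla_\eta$ from the right-hand side. Throughout I will use the elementary consequences of \eqref{xi0} that $|\xi_\eta|=1$, $\xi_\eta\cdot\eta=0$, and $\xi_\eta^\bot=\eta/|\eta|$.

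First I would record three facts. (i) For $a,b\in\re^2$ one has $a\cdot b^\bot=-a^\bot\cdot b$; in particular $\xi_\eta\cdot\wh g^\bot=-\xi_\eta^\bot\cdot\wh g$, which turns the left side of \eqref{sub1} into $-2i\sin(|\eta|)\,\xi_\eta^\bot\cdot\wh g(\eta)$. (ii) Since $|\xi_\eta|=1$, the matrix in \eqref{Emat} is $E(\xi_\eta)=\frac{1}{2\pi}\bigl(I_2-2\,\xi_\eta\otimes\xi_\eta\bigr)$; it is symmetric and satisfies $E(\xi_\eta)\xi_\eta=-\frac{1}{2\pi}\xi_\eta$, so $\xi_\eta\cdot E(\xi_\eta)w=-\frac{1}{2\pi}\,\xi_\eta\cdot w$ for every $w\in\re^2$. (iii) Since $g=\frac12\nabla Q_0$, we have $\wh g(\eta)=\frac{i}{2}\,\eta\,\wh{Q_0}(\eta)$, which is parallel to $\eta$; hence $\xi_\eta\cdot\wh g(\eta)=0$ and, by \eqref{sub2}, also $\xi_\eta\cdot\wt\calS_\xi(\eta)=2\cos(|\eta|)\,\xi_\eta\cdot\wh g(\eta)=0$.

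Taking the inner product of \eqref{sub1} with $\xi_\eta$ and inserting (i) and (ii) gives
\begin{align*}
2i\sin(|\eta|)\,\xi_\eta^\bot\cdot\wh g(\eta)
=-\xi_\eta\cdot\wt\calS_x(\eta)-\frac{\tau-\rho}{2\pi}\,\xi_\eta\cdot\wt\calS_\xi(\eta)+i\,\xi_\eta\cdot(\xi_\eta\cdot\nabla_\eta)\wt\calS_\xi(\eta).
\end{align*}
By (iii) the middle term vanishes. For the last term I would differentiate the scalar identity $\xi_\eta\cdot\wt\calS_\xi\equiv 0$ from (iii) along $\xi_\eta$; the product rule together with the one-line (e.g.\ polar-coordinate) computation $(\xi_\eta\cdot\nabla_\eta)\xi_\eta=-\xi_\eta^\bot/|\eta|$ yields
\begin{align*}
0=[(\xi_\eta\cdot\nabla_\eta)\xi_\eta]\cdot\wt\calS_\xi+\xi_\eta\cdot(\xi_\eta\cdot\nabla_\eta)\wt\calS_\xi
=-|\eta|^{-1}\xi_\eta^\bot\cdot\wt\calS_\xi+\xi_\eta\cdot(\xi_\eta\cdot\nabla_\eta)\wt\calS_\xi,
\end{align*}
so that $\xi_\eta\cdot(\xi_\eta\cdot\nabla_\eta)\wt\calS_\xi=|\eta|^{-1}\xi_\eta^\bot\cdot\wt\calS_\xi$. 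Substituting this into the previous display produces \eqref{idsimp}.

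The argument is entirely elementary; the one point to handle with care is the $\eta$-dependence of $\xi_\eta$ when the operator $\xi_\eta\cdot\nabla_\eta$ is applied, and the conceptually important observation that it is the gradient structure of $g$ — equivalently, that $\wh g$ is parallel to $\eta$ — which both kills the $\tau-\rho$ term and collapses the derivative term into the lower-order quantity $|\eta|^{-1}\xi_\eta^\bot\cdot\wt\calS_\xi$; absent this structure, \eqref{idsimp} would carry an extra term. I do not expect a genuine obstacle.
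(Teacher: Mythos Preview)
Your proof is correct and follows essentially the same route as the paper's own argument: dot \eqref{sub1} with $\xi_\eta$, use $E(\xi_\eta)\xi_\eta=-\tfrac{1}{2\pi}\xi_\eta$ together with $\xi_\eta\cdot\wt\calS_\xi=0$ (from \eqref{sub2} and $\wh g\parallel\eta$) to kill the $(\tau-\rho)$ term, and then differentiate $\xi_\eta\cdot\wt\calS_\xi=0$ along $\xi_\eta$ with the identity $(\xi_\eta\cdot\nabla_\eta)\xi_\eta=-|\eta|^{-1}\xi_\eta^\bot$ to reduce the derivative term. The only cosmetic difference is organizational (you record (i)--(iii) up front), and your Fourier sign $\wh g=\tfrac{i}{2}\eta\,\wh{Q_0}$ differs from the paper's $-\tfrac{i}{2}\eta\,\wh{Q_0}$, which is immaterial since only the parallelism to $\eta$ is used.
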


\begin{proof}
We first multiply \eqref{sub2} by the vector $\xi_\eta$; then we have	
\begin{align*}
\xi_\eta\cdot\wt\calS_\xi(\eta) = \xi_\eta \cdot (2\cos(|\eta|)\hat{g}(\eta)) = \xi_\eta \cdot \LC-\frac{1}{2}\,i\,\eta\,\wh Q_0(\eta)\RC=0,
\end{align*}
where we used the properties $\wh g(\eta)=-\frac{1}{2}\,i\,\eta\,\wh Q_0(\eta)$ and $\xi_\eta\cdot \eta=0$.
Furthermore, from the fact that $E(\xi_\eta)\xi_\eta = -{1\over 2\pi} \xi_\eta$, we get 
\begin{align}\label{dot}
    \xi_\eta \cdot \LC E(\xi_\eta) \calS_\xi(\eta)\RC = \LC   E(\xi_\eta)\xi_\eta\RC \cdot  \calS_\xi(\eta)=\LC -{1\over 2\pi}\xi_\eta\RC \cdot \widetilde\calS_\xi(\eta)= 0.
\end{align}   
Now we multiply both sides of \eqref{sub1} by $\xi_\eta$ and use \eqref{dot}; then we obtain
\begin{align}\label{S_id}
    2i\sin(|\eta|)\xi_\eta\cdot\wh g(\eta)^\bot=\xi_\eta\cdot\wt\calS_x(\eta) -i \xi_\eta\cdot (\xi_\eta\cdot\nabla_\eta) \wt\calS_\xi(\eta).
\end{align}
By a direct computation, we can get $\xi_\eta\cdot\wh g^\bot=-\xi_\eta^\bot\cdot\wh g$. To finish the proof, it remains to show that 
$$
      \xi_\eta\cdot (\xi_\eta\cdot\nabla_\eta) \wt\calS_\xi(\eta) =  {1\over|\eta|}\xi^\bot_\eta\cdot\wt\calS_\xi(\eta).
$$
We apply $\xi_\eta \cdot \nabla_\eta$ to the identity $\xi_\eta\cdot\ts_\xi(\eta) = 0 $, so that
\begin{align*}
     0=(\xi_\eta\cdot\nabla_\eta)(\xi_\eta\cdot\wt\calS_\xi(\eta))=\xi_\eta\cdot(\xi_\eta\cdot\nabla_\eta)\wt\calS_\xi(\eta)+ ((\xi_\eta\cdot\nabla_\eta)\xi_\eta) \cdot \wt\calS_\xi(\eta),
\end{align*}
which implies
\begin{align*} 
    \xi_\eta\cdot  (\xi_\eta\cdot\nabla_\eta)\wt\calS_\xi(\eta)
    &= - ((\xi_\eta\cdot\nabla_\eta) \xi_\eta) \cdot \wt\calS_\xi(\eta)\\
    &= {1\over |\eta|}\xi_\eta^\perp \cdot\wt\calS_\xi(\eta).
\end{align*}
Substituting it into the second term on the right hand side of \eqref{S_id}, this completes the proof.
\end{proof}

To solve for $\wh Q_0$, we first multiply \eqref{sub2} by $\xi_\eta^\bot$; then we have
$$
2\cos(|\eta|)\,\xi_\eta^\bot\cdot\wh g(\eta)=\xi_\eta^\bot\cdot\ts_\xi(\eta).
$$
Adding this to \eqref{idsimp} gives
\eqal{\label{recong}
2\xi_\eta^\bot\cdot \wh g(\eta)&=e^{-i|\eta|}\left[-\xi_\eta\cdot\wt{\mc S}_x(\eta)+\xi_\eta^\bot\cdot\ts_\xi(\eta)+i{1\over |\eta|}\xi_\eta^\bot\cdot\ts_\xi(\eta)\right].
}
Since $\xi_\eta^\bot=\frac{\eta}{|\eta|}$ and $\wh g(\eta)=-\frac{1}{2}\,i\,\eta\,\wh Q_0(\eta)$, we have
$$
2\xi_\eta^\bot\cdot \wh g(\eta)=-i \,|\eta|\,\wh Q_0(\eta).
$$
But $|\eta|\neq 0$, so combining this with \eqref{recong} yields
\eqal{\label{Qetanot0}
\wh Q_0(\eta)=i{e^{-i|\eta|}\over |\eta|}\left[-\xi_\eta\cdot\ts_x(\eta)+\xi_\eta^\bot\cdot\ts_\xi(\eta)+i{1\over |\eta|}\xi_\eta^\bot\cdot\ts_\xi(\eta)\right],\qquad \eta\in\re^2\setminus\{0\}.
}
We recall that $Q_0(x)=q=constant$ for $|x|\ge \rho/2\pi$.  This means $Q_0-q$ has compact support, such that $\wh Q_0(\eta)-(2\pi)^2\,q\,\delta(\eta)$ is an analytic function of $\eta$ well defined at $\eta=0$ (here, $\delta$ is the Dirac measure supported at zero). But for $\eta\neq 0$, this analytic function is \eqref{Qetanot0}, so the RHS of \eqref{Qetanot0} has a smooth extension to $\eta=0$.  We conclude that the Fourier transform of the potential is given by
\eqal{\label{Qhat0}
\wh Q_0(\eta)=(2\pi)^2\,q\,\delta(\eta) + i {e^{-i|\eta|}\over |\eta|} \left[-\xi_\eta\cdot\ts_x(\eta)+\xi_\eta^\bot\cdot\ts_\xi(\eta)+ i{1\over |\eta| }\xi_\eta^\bot\cdot\ts_\xi(\eta)\right],\qquad \eta\in\re^2.
}
 
In order to apply the inverse Fourier transform, we rewrite \eqref{Qhat0} in a more convenient form:
\eqal{\label{Qhat}
\wh Q_0(\eta) &=(2\pi)^2\,q\,\delta(\eta)-{e^{-i|\eta|}\over |\eta|^2}\left[\xi_\eta\cdot(i|\eta|\ts_x)-\xi_\eta^\bot\cdot(i|\eta|\ts_\xi)+\xi_\eta^\bot\cdot\ts_\xi(\eta)\right].
}
In polar coordinates $\eta=h(\cos\theta,\sin\theta)$, the second term is
$$
-{e^{-ih}\over h^2}\left[(-\sin\theta,\cos\theta)\cdot(i|\eta|\ts_x)-(\cos\theta,\sin\theta)\cdot(i|\eta|\ts_\xi)+(\cos\theta,\sin\theta)\cdot\ts_\xi(\eta)\right].
$$
Recalling integral transform \eqref{scat}, we invoke the following identities:
$$
i|\eta|\ts(\eta)=(\pd_\al\s)^\sim(\eta),\qquad f(\theta(\eta))\ts(\eta)=(f(\theta)\s)^\sim(\eta).
$$
Then \eqref{Qhat} becomes
\eqal{\label{Qhat1}
\wh Q_0(\eta)&=(2\pi)^2\,q\,\delta(\eta)-\frac{e^{-ih}}{h^2}\left[(-\sin\theta,\cos\theta)\cdot\pd_\al\s_x+(\cos\theta,\sin\theta)\cdot(\s_\xi-\pd_\al\s_\xi)\right]^\sim(\eta)\\
&=(2\pi)^2\,q\,\delta(\eta)-\frac{e^{-ih}}{h^2}\ts_0(\eta),
}
where we denote\eqal{\label{Delta0}
\s_0(\theta,\al):=(-\sin\theta,\cos\theta)\cdot\pd_\al\s_x(\theta,\al)+(\cos\theta,\sin\theta)\cdot(\s_\xi(\theta,\al)-\pd_\al\s_\xi(\theta,\al))
}
and $\ts_0 $ is defined as in \eqref{scat}.


In what follows, we let $\ms S$ be the space of Schwartz test functions on $\re^2$ and $(u,\vp)=\int_{\re^2}u(x)\vp(x)dx$ be the action of distribution $u$ on $\vp$.  We denote the Fourier transform and its inverse by $\ms F$ and $\ms F^{-1}$.
\begin{lemma}
Let $\vp\in\ms S$.  If potential $Q_0$ satisfies \eqref{Qhat1}, then
\eqal{\label{Qvp}
(Q_0-q,\vp)=\lim_{\eps\to 0}\frac{1}{(2\pi)^3}\int_{\re^2}\varphi(x)\int_0^{2\pi}\int_{\re}S_0(\theta,\al)J_\eps(1-r\cos(\theta-\phi),\al)\,d\al\, d\theta\, dx,
}
where $x=r(\cos\phi,\sin\phi)\in\re^2$, and for each $\eps>0$ and $\zeta,\al\in\re$, we let
\eqal{\label{Je}
J_\eps(\zeta,\al)=\int_0^\infty e^{-h(\eps+i\zeta)}\left(\frac{1-e^{-ih\al}}{h}\right)dh.
}
\end{lemma}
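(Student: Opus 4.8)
The plan is to invert the Fourier transform in \eqref{Qhat1} directly, pass to polar coordinates in $\eta$, unfold the definition \eqref{scat} of $\ts_0$, and recognize the resulting radial integral as $J_\eps$ after an $\eps$-regularization.

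First I would record two consequences of the hypothesis \eqref{Qhat1}. Since $Q_0-q$ is continuous with compact support (it vanishes for $|x|\ge\rho/2\pi$), its Fourier transform $\wh Q_0-(2\pi)^2 q\,\delta=-e^{-i|\eta|}|\eta|^{-2}\ts_0(\eta)$ is a genuine integrable function on $\re^2$, rapidly decaying because $S_0(\theta,\cdot)$ is smooth and compactly supported in $\al$. As $|\eta|^{-2}$ is not locally integrable near the origin in two dimensions while the left-hand side is bounded there, one must have $\ts_0(\eta)=O(|\eta|^2)$ as $\eta\to 0$; recalling from \eqref{scat} that $\ts_0(\eta)=\tfrac1{2\pi}\int_\re e^{-i|\eta|\al}S_0(\theta(\eta),\al)\,d\al$ with $S_0(\theta,\cdot)$ compactly supported in $\al$ (the scattering relation $\s$ vanishes once the impact parameter $|\al|$ is so large that the unperturbed trajectory misses $\supp(Q_0-q)$), this forces
\[
\int_\re S_0(\theta,\al)\,d\al=0\qquad\text{for every }\theta .
\]

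Next, with the Fourier conventions used above, so that $\ms F^{-1}v(x)=(2\pi)^{-2}\int_{\re^2}v(\eta)e^{ix\cdot\eta}\,d\eta$, and using that the integrand is in $L^1$, dominated convergence gives for each $x$
\[
(Q_0-q)(x)=\lim_{\eps\to0}\frac{-1}{(2\pi)^2}\int_{\re^2}e^{-\eps|\eta|}\frac{e^{-i|\eta|}}{|\eta|^2}\,\ts_0(\eta)\,e^{ix\cdot\eta}\,d\eta .
\]
For fixed $\eps>0$ I would write $\eta=h(\cos\theta,\sin\theta)$, $x=r(\cos\phi,\sin\phi)$, so $d\eta=h\,dh\,d\theta$, $x\cdot\eta=hr\cos(\theta-\phi)$, and $|\eta|^{-2}h=h^{-1}$; inserting \eqref{scat} and setting $\zeta:=1-r\cos(\theta-\phi)$, the $\eps$-regularized integral becomes
\[
\frac{-1}{(2\pi)^3}\int_0^{2\pi}\!\!\int_0^\infty\frac{e^{-h(\eps+i\zeta)}}{h}\Big(\int_\re e^{-ih\al}S_0(\theta,\al)\,d\al\Big)dh\,d\theta .
\]
Here the vanishing of $\int_\re S_0(\theta,\al)\,d\al$ lets me replace $e^{-ih\al}$ by $e^{-ih\al}-1$, so that $h^{-1}(e^{-ih\al}-1)$ is bounded near $h=0$; Fubini is now legitimate since $S_0(\theta,\cdot)$ has compact support and $e^{-\eps h}$ is integrable in $h$, and the inner integral collapses to $-\int_\re S_0(\theta,\al)J_\eps(\zeta,\al)\,d\al$ by the very definition \eqref{Je} of $J_\eps$. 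This yields
\[
(Q_0-q)(x)=\lim_{\eps\to0}\frac{1}{(2\pi)^3}\int_0^{2\pi}\!\!\int_\re S_0(\theta,\al)\,J_\eps\!\big(1-r\cos(\theta-\phi),\al\big)\,d\al\,d\theta .
\]

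Finally I would pair with $\vp\in\ms S$ and pull the limit outside the $x$-integration. For this, the Frullani evaluation $J_\eps(\zeta,\al)=\log\frac{\eps+i(\zeta+\al)}{\eps+i\zeta}$ yields the bound $|J_\eps(\zeta,\al)|\le C\bigl(1+\bigl|\log|\zeta|\bigr|+\bigl|\log|\zeta+\al|\bigr|\bigr)$ uniformly in $\eps\in(0,1)$; since $S_0$ is bounded with $\al$-support in a fixed interval, the double integral above is dominated by a function $G(x)$ that is locally bounded and grows at most logarithmically in $|x|$, so $|\vp|\,G\in L^1(\re^2)$ and dominated convergence applies, producing exactly \eqref{Qvp} with constant $1/(2\pi)^3$. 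The main obstacle is the middle step: the radial integral is only conditionally convergent, and everything rests on using the cancellation $\int_\re S_0\,d\al=0$ to recast it as the absolutely convergent integral \eqref{Je}; the $\eps$-regularization together with the uniform logarithmic bound on $J_\eps$ is what makes the two interchanges of limit and integration rigorous.
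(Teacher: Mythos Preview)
Your argument is correct and follows essentially the same route as the paper: regularize by $e^{-\eps|\eta|}$, pass to polar coordinates, use the cancellation $\int_\re S_0(\theta,\al)\,d\al=0$ to tame the $h^{-1}$ singularity, and identify the radial integral as $J_\eps$. The differences are organizational rather than substantive: you work pointwise and pair with $\vp$ at the end (the paper pairs first), you handle the $h=0$ issue by the direct substitution $e^{-ih\al}\mapsto e^{-ih\al}-1$ (the paper instead inserts a cutoff $\chi(h/a)$ and lets $a\to 0$), and you supply the Frullani evaluation of $J_\eps$ to produce an explicit $\eps$-uniform logarithmic dominator for the final interchange, a justification the paper leaves implicit.
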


\begin{proof}
By the Fourier inversion formula for distributions, we have 
$$(Q_0-q,\vp)=(\ms F^{-1}\ms F(Q_0-q),\vp)=(\wh Q_0(\eta)-(2\pi)^2q\,\delta(\eta),\ms F^{-1}\vp(\eta)).$$  
Since $\ms F^{-1}\vp(\eta)$ is rapidly decreasing and $Q_0-q$ has a bounded Fourier transform, we can rewrite this as a limit using the dominated convergence theorem:
$$
(Q_0-q,\vp)=\lim_{\eps\to 0}\left(\wh Q_0(\eta)-(2\pi)^2q\,\delta(\eta),e^{-\eps|\eta|}\ms F^{-1}\vp(\eta)\right).
$$
Substituting in \eqref{Qhat1} and interchanging integrals, we get
\begin{align}\label{Qphi}
\begin{split}
(Q_0-q,\varphi)&=\lim_{\eps\to 0}-\frac{1}{(2\pi)^2}\int_{\re^2}\varphi(x)\int_{\re^2}\frac{e^{-\eps|\eta|+ix\cdot\eta-i|\eta|}}{|\eta|^2}\wt\s_0(\eta)d\eta dx\\
&=:\lim_{\eps\to 0}\frac{1}{(2\pi)^2}\int_{\re^2}\vp(x) I_\eps(x)dx,
\end{split}
\end{align}
where, in polar coordinates $\eta=h(\cos\theta,\sin\theta)$, $x=r(\cos\phi,\sin\phi)$, we denote
\eqal{\label{Ie}
I_\eps(x):=-\int_0^{2\pi}\int_0^\infty e^{-\eps h+ihr\cos(\theta-\phi)-ih}\frac{\wt\s_0(h,\theta)}{h}\,dh\,d\theta.
}

In particular, from the definition \eqref{scat} and $\mathcal{S}(\theta,\alpha)$ is compactly supported in $\alpha$, we can deduce that	
\eqal{\label{intS0}
\wt\s_0(0,\theta)={1\over  2\pi} \int_\re\s_0(\theta,\al)d\al=0,\qquad\theta\in[0,2\pi).
}
Let $\chi\in C^\infty(\re)$ be a cutoff function with $\chi(x)=0$ for $x\le 1/2$ and $\chi(x)=1$ for $x\ge 1$.  From \eqref{intS0}, $I_\eps$ in \eqref{Ie} can be written as
\begin{align*}
I_\eps(x)&=\lim_{a\to 0}\int_0^{2\pi}\int_0^\infty e^{-\eps h+ihr\cos(\theta-\phi)-ih}\frac{\wt\s_0(0,\theta)-\wt\s_0(h,\theta)}{h}\,\chi\left(\frac{h}{a}\right)\,dh\,d\theta.
\end{align*}
Denote $\zeta=1-r\cos(\theta-\phi)$. By the definition \eqref{scat} and interchanging integrals, we get
\begin{align*}
I_\eps(x)&=\lim_{a\to 0}\frac{1}{2\pi}\int_0^{2\pi}\int_\re\int_0^\infty e^{-\eps h-ih\zeta}\left(\frac{1-e^{-ih\al}}{h}\right)\s_0(\theta,\al)\chi\left(\frac{h}{a}\right)dh\,d\al\, d\theta.
\end{align*}
By dominated convergence, the limit commutes with the integrals, and we obtain
\begin{align}\label{Ie1}
\begin{split}
I_\eps(x)&=\frac{1}{2\pi}\int_0^{2\pi}\int_\re \s_0(\theta,\al)\int_0^\infty e^{-\eps h-ih\zeta}\left(\frac{1-e^{-ih\al}}{h}\right)\,dh\,d\al\,d\theta\\
&=\frac{1}{2\pi}\int_0^{2\pi}\int_\re \s_0(\theta,\al)J_\eps(\zeta,\al)\,d\al\,d\theta,
\end{split}
\end{align}
where $J_\eps$ is given by \eqref{Je}.  Combined with \eqref{Qphi}, this recovers \eqref{Qvp}.
\end{proof}

We now turn to the evaluation of the integral \eqref{Qvp}. Then we have the following reconstruction formula for $Q_0$ valid in the small $\|Q_0\|$ limit.

\begin{theorem}\label{recon_Q_1}
Let $q$ be a constant background potential. Suppose that the background potential $Q_0\in 
C^8(\Omega)$ and $Q_0-q$ is compactly supported in $B_{\rho/2\pi}$. 
Then $Q_0$ can be reconstructed by using the formula
\eqal{\label{recon01}
Q_0(x)=q+\frac{1}{(2\pi)^3}\int_0^{2\pi}\int_{-\beta}^\beta \ln|\al+1-r\cos(\theta-\phi)|\,\s_0(\theta,\al)d\al d\theta,
}
where $\beta=1+\rho/2\pi$, $x=r(\cos\phi, \sin\phi)$ with $r\geq0$ and $0\leq\phi<2\pi$ in polar coordinates, and $\s_0$ is defined in \eqref{Delta0}.
\end{theorem}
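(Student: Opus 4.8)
The plan is to evaluate the explicit limit \eqref{Qvp} furnished by the preceding lemma. First I would recognize $J_\eps(\zeta,\al)$ in \eqref{Je} as a complex Frullani integral: writing the integrand as $\big(e^{-h(\eps+i\zeta)}-e^{-h(\eps+i(\zeta+\al))}\big)/h$ and integrating $\int_0^\infty\!\int_a^b e^{-sh}\,ds\,dh$ over the segment from $a=\eps+i\zeta$ to $b=\eps+i(\zeta+\al)$ (which stays in the right half--plane), one gets $J_\eps(\zeta,\al)=\log b-\log a$ with the principal branch. Letting $\eps\to0^+$ and using $\log(\eps+it)\to\ln|t|+i\tfrac\pi2\operatorname{sgn}(t)$ for $t\neq0$ yields
\[
\lim_{\eps\to0}J_\eps(\zeta,\al)=\ln|\zeta+\al|-\ln|\zeta|+i\tfrac\pi2\big(\operatorname{sgn}(\zeta+\al)-\operatorname{sgn}(\zeta)\big),
\]
valid off the hypersurfaces $\zeta=0$ and $\zeta+\al=0$, where $\zeta=1-r\cos(\theta-\phi)$.

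Next I would pass this limit through the three integrals in \eqref{Qvp}. Since $|\log(\eps+it)|\le\tfrac12|\ln(\eps^2+t^2)|+\tfrac\pi2$, one has $|J_\eps(\zeta,\al)|\le|\ln|\zeta+\al||+|\ln|\zeta||+C$ uniformly for $0<\eps\le1$ on bounded sets, and the Schwartz factor $\varphi(x)$ controls the (at most logarithmic) growth in $r=|x|$; hence the integrand is dominated uniformly in $\eps$ by a function that is $L^1_{\mathrm{loc}}$ in $(x,\theta,\al)$, and dominated convergence applies. Now I invoke the vanishing moment \eqref{intS0}, that is $\int_\re\s_0(\theta,\al)\,d\al=0$: the terms $-\ln|\zeta|$ and $-i\tfrac\pi2\operatorname{sgn}(\zeta)$ above do not depend on $\al$, so they integrate to zero against $\s_0(\theta,\al)\,d\al$. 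This leaves
\[
(Q_0-q,\varphi)=\frac{1}{(2\pi)^3}\int_{\re^2}\varphi(x)\int_0^{2\pi}\!\!\int_\re\s_0(\theta,\al)\Big(\ln|\al+\zeta|+i\tfrac\pi2\operatorname{sgn}(\al+\zeta)\Big)d\al\,d\theta\,dx.
\]
Because $Q_0-q$ is a real potential, the left side is real for every real $\varphi\in\ms S$, while $\s_0$, $\varphi$ and $\operatorname{sgn}$ are real; hence the imaginary part of the right side vanishes and one may keep only $\ln|\al+\zeta|=\ln|\al+1-r\cos(\theta-\phi)|$.

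Finally I would shrink the $\al$--domain and conclude pointwise. As $Q_0-q$ is supported in $B_{\rho/2\pi}$, a dipole launched at impact parameter $\al$ has its two vortices moving, to zeroth order, along straight lines at distances $|\al+1|$ and $|\al-1|$ from the origin; if $|\al|\ge\beta=1+\tfrac{\rho}{2\pi}$ then both distances are $\ge\rho/2\pi$, so the dipole never meets $\supp(Q_0-q)$, its evolution is free, hence $\mathcal S(\theta,\al)=0$ and therefore $\s_0(\theta,\al)=0$. Thus the $\al$--integral runs over $[-\beta,\beta]$, and the function $F(x)=\tfrac1{(2\pi)^3}\int_0^{2\pi}\!\int_{-\beta}^{\beta}\s_0(\theta,\al)\ln|\al+1-r\cos(\theta-\phi)|\,d\al\,d\theta$ is continuous in $x$ (an integrable logarithmic singularity paired with the bounded, compactly $\al$--supported $\s_0$). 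Since $(Q_0-q,\varphi)=(F,\varphi)$ for all $\varphi\in\ms S$, we get $Q_0-q=F$ as distributions, hence everywhere since both sides are continuous; this is precisely \eqref{recon01}.

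The main obstacle I anticipate is the rigorous interchange of $\lim_{\eps\to0}$ with the integrations across the singular sets $\{\zeta=0\}$ and $\{\zeta+\al=0\}$, where $J_\eps$ blows up logarithmically -- this needs the uniform dominating bound above -- together with establishing that $\s_0(\theta,\cdot)$ has compact $\al$--support, which is what makes both the cancellation of the $\ln|\zeta|$ term (via \eqref{intS0}) and the reduction to $[-\beta,\beta]$ legitimate. By contrast, the Frullani evaluation and the ``take real parts'' step are essentially routine.
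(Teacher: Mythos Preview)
Your proposal is correct and follows essentially the same route as the paper: compute the kernel $J_\eps$, use the vanishing moment \eqref{intS0} to drop the $\al$-independent piece, take real parts (exploiting that $Q_0$ is real), pass to the limit by dominated convergence, and truncate the $\al$-integral using $\s(\theta,\al)=0$ for $|\al|\ge\beta$. The only cosmetic difference is that you evaluate $J_\eps$ directly as a complex Frullani integral $\log(\eps+i(\zeta+\al))-\log(\eps+i\zeta)$, whereas the paper first takes real parts and then recovers $\Re J_\eps=\tfrac12\ln(\eps^2+(\zeta+\al)^2)-\tfrac12\ln(\eps^2+\zeta^2)$ by noting $\partial_\eps\Re J_\eps=-K_\eps$ and integrating from $\eps$ to $\infty$; both give the same answer.
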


\begin{proof} 
Let us choose any real valued $\vp\in\ms S$.  Since from \eqref{Qphi}
and $Q_0$ is a real valued function, we must have
\eqal{\label{real}
(Q_0-q,\vp)=\lim_{\eps\to 0}\frac{1}{(2\pi)^2}(\Re I_\eps,\vp),
}
where $\Re f$ denotes the real part of $f$, and $I_\eps$ is expressed in \eqref{Ie1}.  Therefore, it suffices to evaluate the real part of \eqref{Je}.  To do this, we first note that
$$
\Re\int_0^\infty e^{-h(\eps+i\zeta)}(1-e^{-ih\al})dh=\frac{\eps}{\eps^2+\zeta^2}-\frac{\eps}{\eps^2+(\zeta+\al)^2}=:K_\eps(\zeta,\al).
$$
Since $\frac{\p}{\p \eps}\Re J_\eps(\zeta,\al)=-K_\eps(\zeta,\al)$ and $\lim_{\eps\to\infty}J_\eps(\zeta,\al)=0$, we obtain:
$$
\Re J_\eps(\zeta,\al)=\int_\eps^\infty K_b(\zeta,\al)\,db=\frac{1}{2}\ln(\eps^2+(\zeta+\al)^2)-\frac{1}{2}\ln(\eps^2+\zeta^2).
$$
Let us substitute this into the real part of \eqref{Ie1}.  From \eqref{intS0}, the $\ln(\eps^2+\zeta^2)$ term integrates to zero, and we get
\eqal{\label{RIe}
\Re I_\eps(x)=\frac{1}{2\pi}\int_0^{2\pi}\int_\re\s_0(\theta,\al)\ln[(\eps^2+(\zeta+\al)^2)^{1/2}]\,d\al\,d\theta.
}
By dominated convergence, we can evaluate the $\eps\to 0$ limit in \eqref{real}:
\eqal{\label{recondist}
(Q_0-q,\varphi)=\frac{1}{(2\pi)^3}\left(\vp(x),\,\,\int_0^{2\pi}\int_\re\s_0(\theta,\al)\ln|\zeta+\al|\,d\al\,d\theta\right).
}
This holds for all real valued $\vp\in\ms S$.  To obtain \eqref{recon01} from \eqref{recondist}, we note that $\s(\theta,\al)=0$ if $|\al|\ge 1+\rho/2\pi$.  For these values of $\al$, neither vortex orbit intersects $B_{\rho/2\pi}$, so $X(\tau,X^{(0)})-X_e(\tau,X^{(0)})=0$.
\end{proof}

\subsection{Rescaled initial conditions}
\label{subsec:rescale}
We now present the reconstruction formula using more flexible initial conditions.  To obtain formula \eqref{recon01}, we assumed $|\xi_0|=1$.  We now relax this convention and fix $\sigma\in(0,\infty)$.  We intend to take $|\xi_0|=\sigma$.  In order to relate the previous situation of $|\xi_0|=1$ to the more general case, we use the following scaling covariance of Hamiltonian systems \eqref{e:xeqn}, \eqref{e:xieqn}:
\eqal{
x(s)&\to \wt x(\wt s)=\sigma x(s),\\
\xi&\to\wt \xi(\wt s)=\sigma \xi(s),\\
s&\to \wt s=\sigma^2\,s,\\
Q_0(y)&\to \wt Q_0(y)=Q_0\left(\frac{y}{\sg}\right).
}
In other words, if $(x(s),\xi(s))$ is a solution for potential $Q_0(y)$, the rescaled vector $(\wt x(s),\wt\xi(s))=(\sigma x(\frac{s}{\sg^2}),\sg\xi(\frac{s}{\sg^2}))$ is a solution for rescaled potential $\wt Q(y)=Q_0(\frac{y}{\sg})$.

In order for the initial conditions $(x(0),\xi(0))=(\al\frac{\xi_0^\bot}{|\xi_0|}-\frac{\rho}{2\pi}\frac{\xi_0}{|\xi_0|},\xi_0)$ to rescale in this way, or
$$
(x_0,\xi_0)\to(\wt x_0,\wt\xi_0)=(\sg x_0,\sg\xi_0),
$$
we need for the impact parameter $\al$ and support radius $\rho$ to rescale as follows:
$$
(\al,\rho)\to(\wt\al,\wt\rho)=(\sg\al,\sg\rho).
$$
The exit time $\tau$ in \eqref{tau} rescales like $s$: $\tau\to\wt\tau=\sg^2\tau$.  The scattering relation $\s=X(\tau,X^{(0)})-X_e(\tau,X^{(0)})$ becomes
$$
\s(\theta,\al)\to\wt\s(\theta,\wt\al)=\sg\s(\theta,\al).
$$
This means $\s(\theta,\al)=\frac{1}{\sg}\wt\s(\theta,\sg\al)$.  Substituting this into \eqref{Delta0}:
\begin{align*}
\s_0(\theta,\al)&=\frac{1}{\sg}\left[(-\sin\theta,\cos\theta)\cdot\,\sg\pd_u\wt\s_x(\theta,u)+(\cos\theta,\sin\theta)\cdot(\wt\s_\xi(\theta,u)-\sg\pd_u\wt\s_\xi(\theta,u))\right]\Big|_{u=\sg\al}\\
&=:\frac{1}{\sg}\wt\s_0(\theta,\wt\al).
\end{align*}

Let us substitute this information into \eqref{recon01}, i.e.
\begin{align*}
x=\frac{\wt x}{\sg},\qquad Q_0(x)=\wt Q_0(\wt x),\qquad\rho=\frac{\wt\rho}{\sg},\qquad\s_0(\theta,\al)=\frac{1}{\sg}\wt\s_0(\theta,\sg\al).
\end{align*}
We get
\begin{align*}
\wt Q_0(\wt x)-q&=\frac{1}{(2\pi)^3\sg}\int_0^{2\pi}\int_{-\beta}^\beta\ln|\al+1-\frac{\wt r}{\sg}\cos(\theta-\wt\phi)|\wt\s_0(\theta,\sg\al)d\al d\theta\\
&=\frac{1}{(2\pi)^3\sg^2}\int_0^{2\pi}\int_{-\wt\beta}^{\wt\beta}\ln|\wt\al+\sg-\wt r\cos(\theta-\wt\phi)\wt\s_0(\theta,\wt\al)d\wt\al d\theta,
\end{align*}
where $\wt\beta:=\sg\beta=\sg+\wt\rho/2\pi$, and we used \eqref{intS0}.

\bigskip
Therefore, the generalized reconstruction formula for $|\xi_0|=\sg\in(0,\infty)$ is as follows:
\eqal{\label{recon}
Q_0(x)=q+\frac{1}{(2\pi)^3\,\sigma^2}\int_0^{2\pi}\int_{-\beta}^{\beta} \,\ln|\al+\sigma-r\cos(\theta-\phi)|\,\s_0(\theta,\al)d\al d\theta,
}
where $\beta\equiv\sigma+\rho/2\pi$, $x=r(\cos\phi,\sin\phi)$, and
\begin{align}\label{S0}
\s_0(\theta,\al)&\equiv\sigma(-\sin\theta,\cos\theta)\cdot\pd_\al\s_x(\theta,\al)+(\cos\theta,\sin\theta)\cdot(\s_\xi(\theta,\al)-\sigma\pd_\al\s_\xi(\theta,\al)).
\end{align}
Setting $\sg=1$ recovers \eqref{Delta0} and \eqref{recon01}.

\subsection{Radial potentials}
As a distinguished special case, let us consider radial (rotationally invariant) potentials $Q_0(x)=Q(|x|)$, for which the reconstruction formula simplifies considerably.  For these potentials, the Hamiltonian systems \eqref{e:xeqn}, \eqref{e:xieqn} admit the following rotation symmetry:
$$
(x(s),\xi(s))\to(x^{(\theta)}(s),\xi^{(\theta)}(s))=(R_\theta\,x(s), R_\theta\,\xi(s)),
$$
where $R_\theta=\begin{pmatrix}
\cos\theta&-\sin\theta\\
\sin\theta&\cos\theta
\end{pmatrix}$ is a rotation matrix, for each $\theta\in\re$ (we are interpreting $x$ and $\xi$ as column vectors).  In other words, for each $Q(|x|)$, if $(x,\xi)$ is the solution with initial conditions $(x_0,\xi_0)$, then $(x^{(\theta)},\xi^{(\theta)})=(R_\theta\,x,R_\theta\,\xi)$ is the solution with initial conditions $(R_\theta\,x_0,R_\theta\,\xi_0)$.

\medskip
Recall that scattering relation $\s(\theta,\al)$ was defined for initial conditions $(x_0,\xi_0)=(\al\frac{\xi_0^\bot}{|\xi_0|}-\frac{\rho}{2\pi}\frac{\xi_0}{|\xi_0|},\xi_0)$, where $\xi_0^\bot=\sg(\cos\theta,\sin\theta)$ and $\xi_0=\sg(-\sin\theta,\cos\theta)$ for each $(\theta,\al)$.  We will denote these initial conditions by $(x^{(\theta)}_0,\xi^{(\theta)}_0)$.  Then it is easy to check that \textcolor{red}{
}
$$
(x^{(\theta)}_0,\xi^{(\theta)}_0)=(R_\theta\,x^{(0)}_0,R_\theta\,\xi^{(0)}_0) 
$$
for all $\theta\in\R$, where
$$
    x^{(0)}_0=
    \begin{pmatrix}
    \al\\
    -\frac{\rho}{2\pi}
    \end{pmatrix},
    \qquad\xi^{(0)}_0=
    \begin{pmatrix}
    0\\
    \sg
    \end{pmatrix}.
$$ 
Furthermore, by rotational symmetry, we have
$$
(x^{(\theta)}(s),\xi^{(\theta)}(s))=(R_\theta\,x^{(0)}(s),R_\theta\,\xi^{(0)}(s)),
$$
so the scattering relation $\s=X(\tau,X^{(0)})-X_e(\tau,X^{(0)})$ varies with $\theta$ as follows:
\eqal{\label{rot}
\s_y(\theta,\al)=
R_\theta\,\s_y(0,\al),\qquad y=x,\ \xi.
}
If we substitute these relations into \eqref{S0}, we find that the $\theta$ dependence in $\s_0$ disappears:
\eqal{\label{s0r}
\s_0(\theta,\al)&=\sigma\pd_\al\s_{x,2}(0,\al)+\s_{\xi,1}(0,\al)-\sigma\pd_\al\s_{\xi,1}(0,\al)\\
&=:\s_0(\al),
}
where $\s_x=(\s_{x,1} , \s_{x,2})$ and $\s_\xi=(\s_{\xi,1} , \s_{\xi,2})$.
Therefore, the $\theta$ integral in reconstruction formula \eqref{recon} can be performed explicitly.  The resulting formula for a radial potential is a single integral:
\eqal{\label{reconr}
Q(r)=q+\frac{1}{(2\pi\sigma)^2}\int_{-\beta}^\beta \LC\ln\,\lambda(r,\al)\RC\s_0(\al)d\al,
}
where $\lambda(r,\al)$ is given away from the point $(r,\al)=(0,-\sigma)$ by
\eqal{\label{lam}
\lambda(r,\al)=
\begin{cases}
\frac{1}{2}r,&r>0\text{ and } |\al+\sigma|\le r,\\
\frac{1}{2}(|\al+\sigma|+\sqrt{(\al+\sigma)^2-r^2}),&0\le r< |\al+\sigma|.
\end{cases}
}
This $\theta$ integral was computed with a symbolic integration program, Mathematica 9.0.

\medskip
For radial potentials reconstructed using \eqref{reconr}, we see that although the approximately reconstructed $Q_0-q$ will have compact support as required, the radius of the support will be too large by a factor of $2\sigma$, the dipole distance.  Indeed, if $r\ge \beta+\sigma$, then the integral kernel in \eqref{reconr} is 
$$
\frac{1}{(2\pi\sigma)^2}\ln(r/2)
$$
for all $\al\in[-\beta,\beta]$.  Since $\int_\R \s_0(\al)d\al=0$ by \eqref{intS0}, this implies that $Q(r)=q$ for $r\ge\beta+\sigma=\rho/2\pi+2\sigma$.  Since the same cannot be said for $r< \beta+\sigma$, $Q_0-q$ may not be zero in this ball.  This support increase can also be understood from the exponential order of the Fourier transform \eqref{Qhat1}.

\section{Numerical Reconstructions}
\label{sec:numer}
Given 
the
scattering relation, evaluating one of integrals \eqref{recon} or \eqref{reconr} is sufficient to reconstruct weak background potentials.  Such numerical integration is straightforward, and the computation times for \eqref{reconr} are, at most, a few seconds using basic software.  For stronger potentials, an iteration method such as that in \cite{CQUZ} might be developed.

We first reconstruct a radial potential with compact support:
\eqal{\label{poly}
Q_0(x)=\eps\,\left(1-\frac{|x|^2}{\omega^2}\right)^{\kappa+1},\qquad 0\le |x|\le \omega,
}
with $Q_0(x)=0$ for other $x$.  Here, $\eps>0$ is a small parameter, $\kappa>0$ controls the smoothness of $Q_0$ (i.e. $Q_0\in C^{\kappa}(\re^2)$), and $\omega>0$ is the support radius of $\nabla Q_0$.  

To generate 
the 
scattering relation for various potential strength $\epsilon$, we choose $\rho/2\pi=1$, $\omega=1/2$, $\sigma=1/10$, $\kappa=8$, and $\tau=2\sigma\rho$.  We solve ODE system \eqref{odesys} numerically, using a differential equation solver ``NDSolve" in Mathematica 9.0, for the range of initial conditions $\xi_0=(0,\sigma)$ and $x_0=(\al_\ell,-\frac{\rho}{2\pi})$, where $\al_\ell=\beta(\frac{\ell}{N}),\ell=0,\pm1,\dots,\pm N$, with $\beta=\sigma+\rho/2\pi$ and $N=400$, say.  Evaluating $\s(0,\al)=X(\tau,x_0,\xi_0)-X_e(\tau,x_0,\xi_0)$ yields a table of scattering relation, plotted in Figure \ref{fig:scat} for $\eps=0.01$.

To evaluate integral \eqref{reconr} numerically for several $r\in[0,\beta]$, we used the composite Simpson's rule, where $[-\beta,\beta]$ was again discretized according to $\al_\ell=\beta(\frac{\ell}{N})$.  Function $\s_0(\al)$ in \eqref{s0r} was computed at the points $\al_\ell$ using central difference quotients, $\pd_\al\s(\al_\ell)\approx\frac{\s(\al_{\ell+1})-\s(\al_{\ell-1})}{\al_{\ell+1}-\al_{\ell-1}}$, with $\s(\al_{\ell})=0$ if $|\ell|\ge N+1$.  See Figure \ref{fig:S0}.

\begin{figure}
   \centering
   \includegraphics[width=4in]{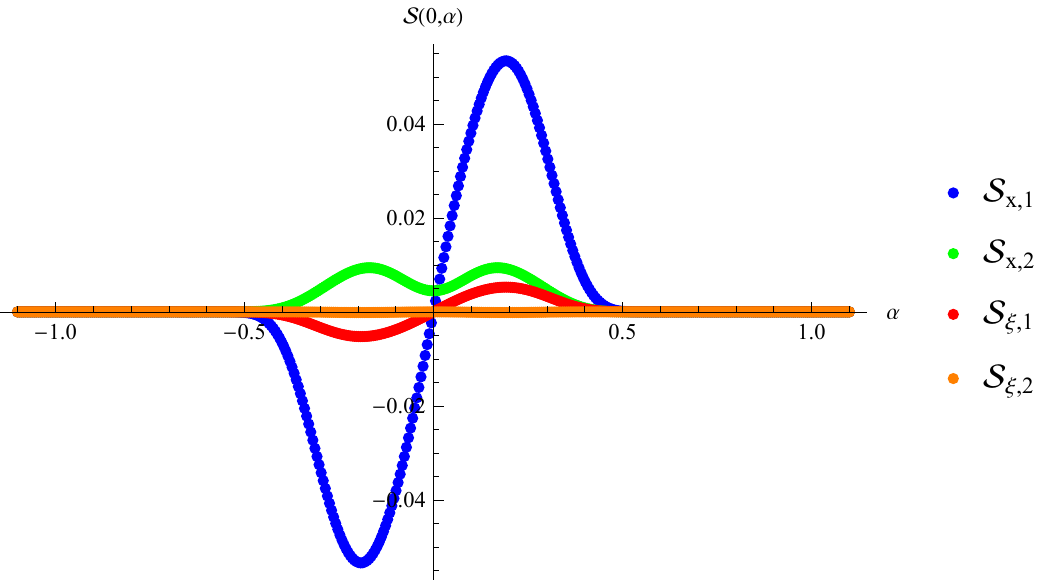} 
   \caption{Numerically generated scattering relation from \eqref{poly} for $\eps=0.01$, $\omega=0.5$, and $\kappa=8$.}
   \label{fig:scat}
\end{figure}

\begin{figure}
   \centering
   \includegraphics[width=3.5in]{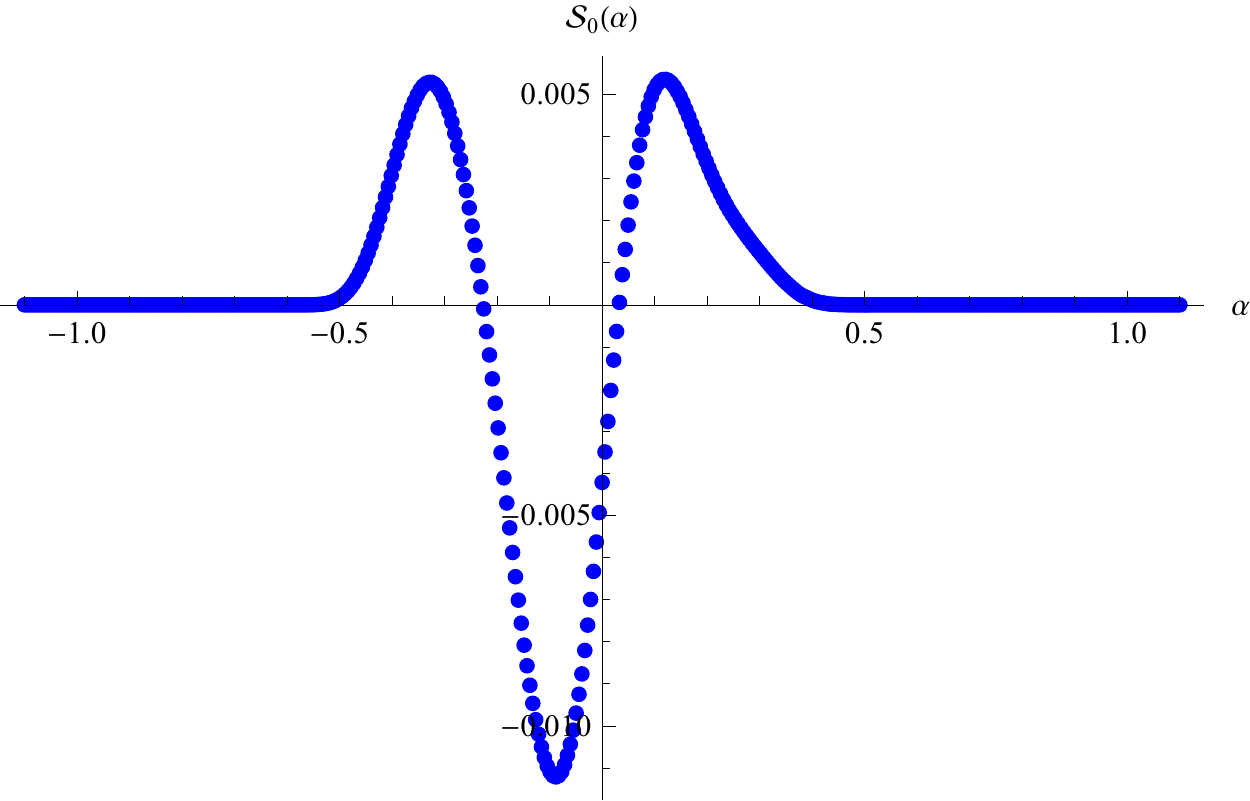} 
   \caption{Scattering function $\s_0(\al)$ in \eqref{s0r} from \eqref{poly} for $\eps=0.01$, $\omega=0.5$, and $\kappa=8$.}
   \label{fig:S0}
\end{figure}

In Figure \ref{fig:poly}, the reconstructions (dashed lines) are compared with the exact potential \eqref{poly} (solid line).  Here, $Q(r)/\eps$ is plotted for various $\eps$.  It is clear that the reconstructions improve as $\eps$ decreases.  For larger $\eps$, the agreement is poor, which indicates that the linearization \eqref{numeric_id} becomes invalid.

\begin{figure}
   \centering
   \includegraphics[width=4in]{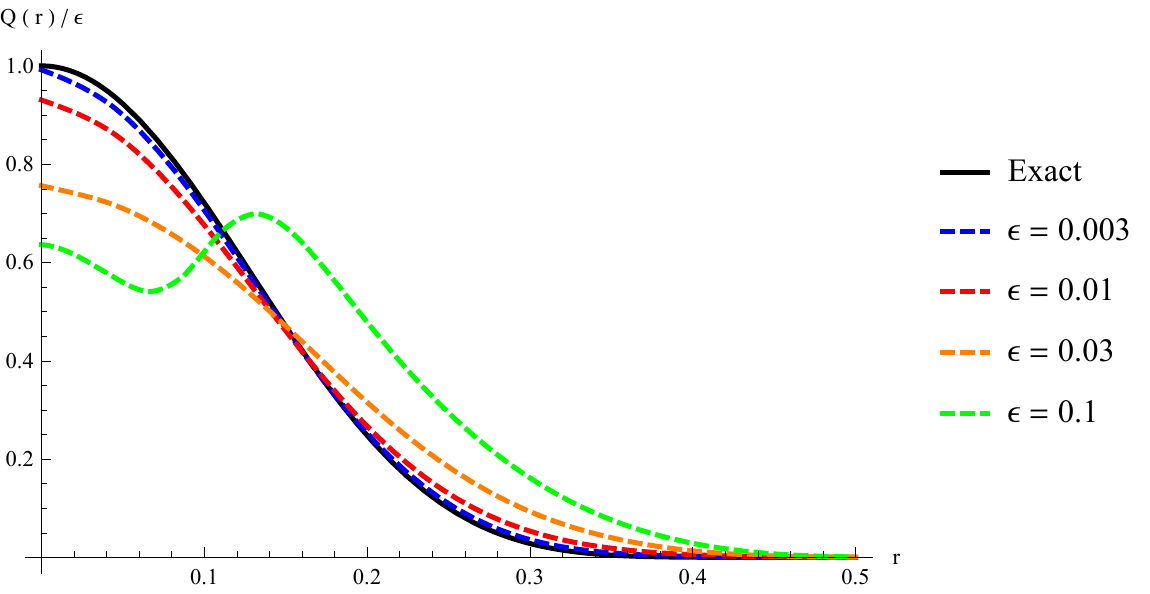} 
   \caption{Reconstruction of differentiable potential with compact support \eqref{poly} (solid line) using \eqref{reconr} for several $\eps$ (dashed line) and $\kappa=8$.}
   \label{fig:poly}
\end{figure}

\medskip
If we choose $\kappa=0$ in \eqref{poly}, then $\nabla Q_0$ is not continuous ($Q(r)$ has a cusp at $r=\omega$). However, reconstructions of weak background potentials are still possible.  Letting the other parameters be as before, we present reconstructions in Figure \ref{fig:poly_k1} for various $\eps$.  The error still vanishes uniformly as $\eps\to0$.  

\begin{figure}
   \centering
   \includegraphics[width=4in]{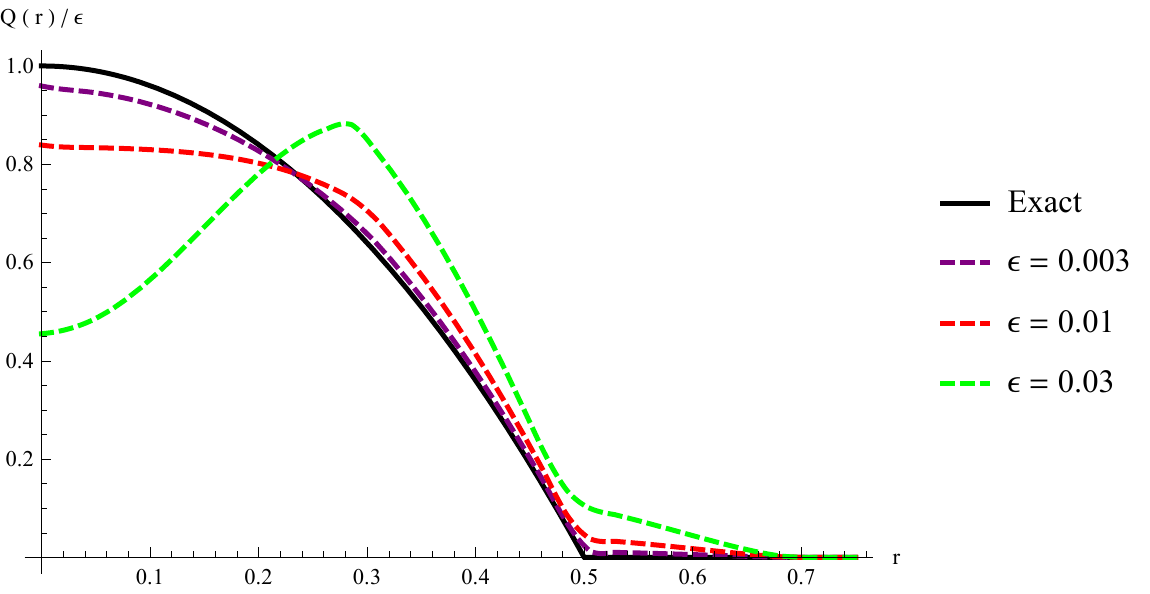} 
   \caption{Reconstruction of compactly supported potential with cusp \eqref{poly} (solid line) using \eqref{reconr} for several $\eps$ (dashed line) and $\kappa=0$.}
   \label{fig:poly_k1}
\end{figure}

Figure \ref{fig:poly_k1} also clearly demonstrates how the reconstruction adds support.  Although $Q_0$ in \eqref{poly} is supported on $|x|\le 0.5$, the reconstructions are supported on $|x|\le 0.7$.  The additional $0.2$ support radius corresponds to the dipole distance $2\sigma$.  As a consequence, the approximations appear more smooth at $r=0.5$ (and less near $r=0.7$) than the exact potential.

\medskip
We also use \eqref{reconr} to reconstruct the following potential that does not have compact support:
\eqal{\label{gauss}
Q_0(x)=\eps\,e^{-10\left(\frac{|x|}{\rho/2\pi}\right)^2}.
}
Our reconstruction aims to approximate $Q_0$ only in $B_{\rho/2\pi}$; because we extend $\s_0(\al)=0$ for $|\al|\ge \beta$, the resulting reconstruction will have compact support.  Letting parameters be as before, the reconstructions are presented in Figure \ref{fig:Gauss}.  Those with small $\eps$ clearly recover the potential in the region $r\le 1$.

\begin{figure}
   \centering
   \includegraphics[width=4in]{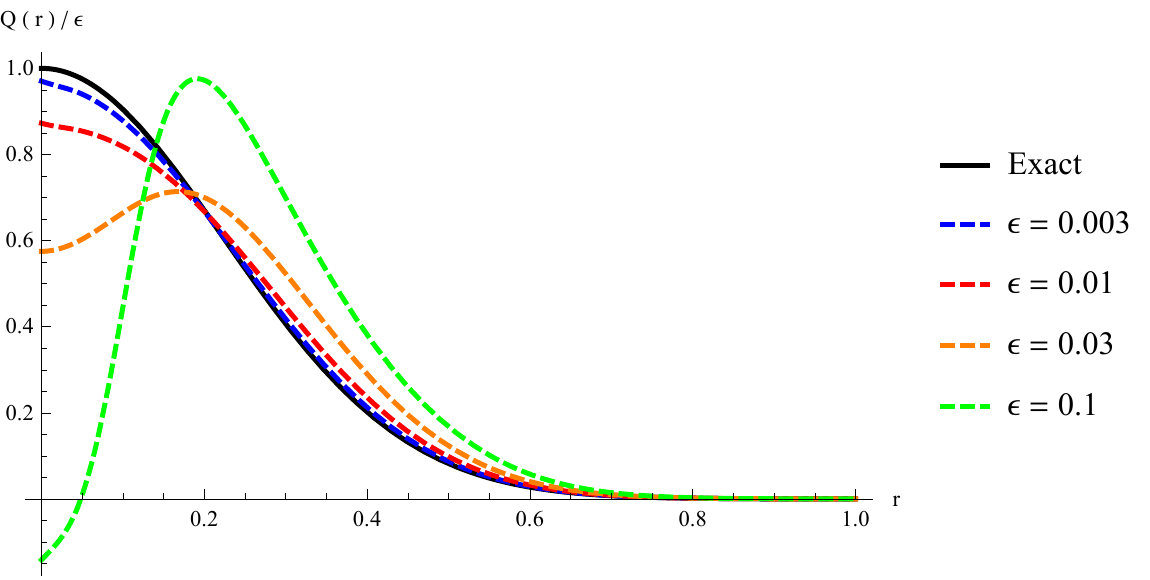} 
   \caption{Reconstruction of Gaussian potential \eqref{gauss} (solid line) using \eqref{reconr} for several $\eps$ (dashed line).}
   \label{fig:Gauss}
\end{figure}



\bibliographystyle{abbrv}

\bibliography{dipolebib}

 \end{document}